\newtheorem{theorem}{Theorem}
\newtheorem{remark}{Remark}
\newtheorem{definition}{Definition}
\newtheorem{proposition}{Proposition}
\newtheorem{corollary}{Corollary}
\newcommand\numberthis{\addtocounter{equation}{1}\tag{\theequation}}
\newenvironment{proof-sketch}{\proof}{\endproof}
\newcommand{\mb}{\mathbf}
\newcommand{\mc}{\mathcal}
\newcommand{\agent}{_{\mc A}}
\newcommand{\cloud}{_{\mc C}}
\newcommand{\target}{_{\mc T}}
\DeclareMathOperator*{\argmin}{arg\,min}
\DeclareMathOperator*{\argmax}{arg\,max}
\newcounter{protocol}
\newenvironment{protocol}[1][]%
  {
     \needspace{2\baselineskip}
    \noindent \rule{\linewidth}{1pt} \endgraf
    \refstepcounter{protocol}
    \centering \textsc{Protocol}~\theprotocol%
    \ifthenelse{\isempty{#1}}{}{:\ #1}
  }{
  \vspace{-5pt}
  \noindent \rule{\linewidth}{1pt}\vspace{2pt}
  }
\begin{document}
\title{Cloud-based Quadratic Optimization with \\Partially Homomorphic Encryption}

\author{Andreea~B.~Alexandru,~\IEEEmembership{Student Member,~IEEE,}
        Konstantinos~Gatsis,~\IEEEmembership{Member,~IEEE,}
        Yasser~Shoukry,~\IEEEmembership{Member,~IEEE,}
  	Sanjit~A.~Seshia,~\IEEEmembership{Fellow,~IEEE,}
        Paulo~Tabuada,~\IEEEmembership{Fellow,~IEEE,}
        and~George~J.~Pappas,~\IEEEmembership{Fellow,~IEEE}
\thanks{A.~B. Alexandru, K.~Gatsis, G.~J. Pappas are with the Department
of Electrical and Systems Engineering, University of Pennsylvania, Philadelphia,
PA 19104, USA, e-mail: aandreea,kgatsis,pappasg@seas.upenn.edu.}
\thanks{Y. Shoukry is with the Deptartment of Electrical Engineering and Computer Science, University of California, Irvine, CA 92697, USA; e-mail: yshoukry@uci.edu.}
\thanks{S.~A. Seshia is with the Department of Electrical Engineering and Computer Sciences, University of California, Berkeley, CA 94720, USA, e-mail: sseshia@eecs.berkeley.edu.}
\thanks{P. Tabuada is with the Department of Electrical Engineering, University of California, Los Angeles,
 Los Angeles, CA 90095, USA, e-mail: tabuada@ee.ucla.edu.}
}

\maketitle

\begin{abstract}
The development of large-scale distributed control systems has led to the outsourcing of costly computations to cloud-computing platforms, as well as to concerns about privacy of the collected sensitive data.
This paper develops a \mbox{cloud-based} protocol for a quadratic optimization problem involving multiple parties, each holding information it seeks to maintain private. The protocol is based on the projected gradient ascent on the Lagrange dual problem and exploits partially homomorphic encryption and secure multi-party computation techniques. Using formal cryptographic definitions of indistinguishability, the protocol is shown to achieve computational privacy, i.e., there is no computationally efficient algorithm that any involved party can employ to obtain private information beyond what can be inferred from the party's inputs and outputs only. In order to reduce the communication complexity of the proposed protocol, we introduced a variant that achieves this objective at the expense of weaker privacy guarantees. We discuss in detail the computational and communication complexity properties of both algorithms theoretically and also through implementations. We conclude the paper with a discussion on computational privacy and other notions of privacy such as the non-unique retrieval of the private information from the protocol outputs.
\end{abstract}

\IEEEpeerreviewmaketitle

\section{Introduction}\label{sec:introduction}

The recent push towards increasing connectivity of devices is enabling control applications to span wider geographical areas such as cities or even countries as in the case of smart grids. This increase in the number of available sensors, actuators and data has led to an increase in the controllers' required computational capacity for such applications. 
One solution that has gained important traction is to outsource the computations to powerful remote servers, generically called clouds. Cloud-based computation services offer the potential to aggregate and to process information that is gathered from a large number of distributed agents, in domains such as machine learning, signal processing, smart grids, autonomous vehicles and control over the Internet-of-Things, with less overhead than distributed computations. Notwithstanding, there are substantial challenges that arise from the privacy-sensitive nature of some of the collected data. Secure communication to and from the cloud is a crucial aspect, but is not sufficient to protect the users' data. Recent examples such as data leakage and abuse by aggregating servers~\cite{Subashini11,Fernandes14,DataBreaches} have drawn attention to the risks of storing data in the clear and urged measures against an untrustworthy cloud. A plethora of different schemes addressing this challenge have been proposed in the literature, spanning many research areas. 

Schemes that allow secure function evaluation on private inputs from multiple parties while preserving the privacy of the data involved have been a desirable technique for many decades, coining the term \textit{secure multi-party computation}. Some secure multi-party computation protocols provide \textit{computational security}, while other provide \textit{information theoretic security}. 
Computational security guarantees that there is no computationally efficient algorithm the parties involved in the protocol can implement so that they gain private information beyond what can be inferred from their inputs and outputs of the protocol only. On the other hand, information theoretic security guarantees that no algorithms of unlimited computational power exist that can lead to private data leakage. One approach within this context is to encrypt the computation itself. This primarily refers to Yao's \textit{garbled circuits}, introduced in~\cite{Yao82}, and the protocol of Goldreich-Micali-Widgerson, introduced in~\cite{GMW87}, where both allow secure evaluation of functions represented as Boolean circuits. Although powerful with respect to the functions they can evaluate, garbled circuits require constructing high depth circuits and transmitting them over a communication channel, which incurs a substantial cost in computation and communication. While many improvements in the efficiency of garbled circuits have been made over the years, the complexity still prohibits the usage of such schemes in real time applications~\cite{Bellare12}. 

Another path within secure multi-party computation with computational security guarantees is computing on encrypted data. 
This approach, suggested in~\cite{Rivest78data} under the name ``privacy homomorphisms'', is currently known as homomorphic encryption. Homomorphic encryption allows evaluating specific types of functionals on encrypted inputs. 
Homomorphic encryption comes in three flavors: \textit{fully}, \textit{somewhat} and \textit{partially} homomorphic encryption. Fully homomorphic encryption allows evaluation of Boolean functions over encrypted data and was introduced in~\cite{GentryPhD}, and further improved in terms of efficiency in~\cite{Gentry13,DGHV10,Brakerski14,Cheon2017homomorphic}. However, the computational overhead is prohibitive, due to the complexity of the cryptosystem primitives and size of the encrypted messages~\cite{Halevi17}. 
Somewhat homomorphic encryption schemes, e.g.~\cite{Brakerski11}, are less complex, but support only a limited number of operations (additions and multiplications). Partially homomorphic encryption schemes are tractable but can support either multiplications between encrypted data, such as El Gamal~\cite{ElGamal84}, unpadded RSA~\cite{Rivest78data} or additions between encrypted data, such as Paillier~\cite{Paillier99}, Goldwasser-Micali~\cite{GM82}, DGK~\cite{DGK07}. 

While the above techniques ensure that the output is correctly computed without revealing anything about the inputs in the process, depending on the application, an adversary might be able to learn information about the input only from the output. Techniques such as \textit{differential privacy} ensure that the revealed output does not break the privacy of the input. 
Differential privacy, introduced in~\cite{Dwork2006calibrating} and surveyed in~\cite{Dwork08,Dwork2014algorithmic}, adds controlled randomness in the data, so that the results of statistical queries do not tamper with the privacy of the individual entries in a database, by introducing uncertainty in the output. However, the noise determines a trade-off between the accuracy of the results of the differentially private queries and the privacy parameter.

\subsection{Related work}
Homomorphic encryption has been recently used to design encrypted controllers, with partially homomorphic encryption schemes for linear controllers in~\cite{Kogiso15,Farokhi17}, model predictive control applications in~\cite{Darup18,Alexandru18}, with fully homomorphic encryption in~\cite{Kim16}, and to design observers in~\cite{Alanwar17}. 
Unconstrained optimization problems are commonly used in machine learning applications, e.g., for linear regression, and several works have addressed gradient methods with partially homomorphic encryption~\cite{Han10grad,Hardy17} or with ADMM in a distributed setting~\cite{Zhang17privacy}. However, adding constraints substantially complicates the optimization problem by restricting the feasible domain. In addition, constraints introduce nonlinear operations in the optimization algorithm, which cannot be handled inherently by the partially homomorphic cryptosystems. In our prior work~\cite{Shoukry16,Alexandru17}, we showed how to overcome these difficulties by using blinded communication in a centralized setup. Recently,~\cite{Lu18} proposed a distributed approach to the constrained optimization problem with homomorphic encryption, where each party locally performs the projection on the feasible domain on unencrypted data. Similarly,~\cite{Zheng2019helen} proposed a distributed secure multi-party solution to regularized linear model training that also allows for malicious adversaries. 

Examples of works that tackle private linear programming problems or machine learning applications with optimization subroutines using garbled circuits, secret sharing, and hybrid approaches between the above are~\cite{Li2006secure,Xie16,Mohassel17,Gascon17,Chase17} and the references within. 
In control applications, works such as~\cite{leNy14differentially,Dong2015differential,Cortes2016differential,Wang2017differential} employ differential privacy to protect dynamic data. 
Furthermore, examples such as~\cite{Huang15,Han17,Nozari18} make use of differential privacy techniques in optimization algorithms. Other lines of work solve optimization problems with \mbox{non-cryptographic} methods: using transformations, e.g.~\cite{Vaidya2009privacy,Weeraddana2013per,Wang2015efficient,Salinas2016efficient}, which guarantee an uncertainty set around the private data, or using arbitrary functions for obfuscating the objective functions in a special graph topology~\cite{Gade16}. 

\subsection{Contributions}

In this paper we consider a privacy-preserving cloud-based constrained quadratic optimization. In control theoretic applications and machine learning, linear and quadratic optimization problems arise frequently -- e.g., state estimation under minimum square error, model predictive control, support vector machines -- which require privacy guarantees of the data involved in the computation. 

We develop a new tractable optimization protocol to privately solve constrained quadratic problems. Our protocol relies on cryptographic tools called encryption schemes. To solve the optimization problem on encrypted data, we use an additively homomorphic encryption scheme, where, in short, \textit{addition commutes with the encryption function}. Thus, a party can process encrypted data without having direct access to the data. 
The novelty is how to handle in a privacy-preserving manner the constraints of the problem that introduce non-linearities that cannot be supported by additively homomorphic encryption schemes. We show that a projected gradient method that operates on the Lagrange dual problem can alleviate this problem and can be efficiently run on encrypted data by exploiting communication between the participating parties. 

The main contributions of our work are the following:
\begin{itemize}[topsep=0pt,wide, labelwidth=!, labelindent=0pt]
	\item We formally state and prove computational security guarantees for such protocol. 
The proof relies on applying cryptographic tools to the specific optimization algorithm that runs over multiple iterations. 
	\item We propose an alternative protocol which sacrifices some privacy but involves less communication overhead. 
	\item We implement the protocols and show the computational~and communication complexity produce reasonable running times.
\end{itemize}

Furthermore, we emphasize the difference between the computational security guarantee with the \textit{non-unique retrieval} guarantee that is important in such optimization problems. 
We finally point out that the present manuscript provides detailed security proofs and analyses not available in previous versions of the papers~\cite{Shoukry16,Alexandru17}.

\emph{Organization.}
The rest of the paper is organized as follows: we formulate the problem in Section~\ref{sec:problem_setup} and formally introduce the computational security guarantees in Section~\ref{sec:privacy_goals}. We describe the cryptographic tools we use in Section~\ref{sec:encryption_sch}, more specifically, the partially homomorphic scheme, along with its properties and the symmetric encryption used. In Section~\ref{sec:gradient_ascent}, we describe the optimization theory behind the proposed protocol and justify the choice of the gradient method. Furthermore, we present the main protocol and the subroutines that compose it and in Section~\ref{sec:security_proof}, we show that the protocol achieves privacy of the agent's data and of the target node's result. We discuss possible relaxations of the security guarantees and propose a more efficient protocol under these weaker conditions in Section~\ref{sec:relaxation_security}. In Section~\ref{sec:discussion}, we provide a privacy analysis of the problem concerning the input-output relation. We present the complexity analysis of the protocols and show that the experimental results point out reasonable running times in Section~\ref{sec:complexity_time}. Additional tools necessary for proving the security of the proposed protocol are introduced in Appendix~\ref{app:cryptographic_prel} and further details on the protocols used are given in Appendix~\ref{app:comparison}. Finally, the detailed privacy proofs are given in Appendix~\ref{app:proof}.

\emph{Notation.}
We denote matrices by capital letters. $\mathbb R^{m\times n}$ represents the set of $m\times n$ matrices with real elements and $\mathbb S^n_{++}$ represents the set of symmetric positive definite $n\times n$ matrices. Element-wise inequalities between vectors are denoted by $\preceq$. The notation $x\leftarrow X$ means that $x$ is uniformly drawn from a distribution $X$. $\text{Pr}$ denotes the probability taken over a specified distribution. We refer to algorithms that are run interactively by multiple parties as protocols.


\section{Problem Setup}\label{sec:problem_setup}

\subsection{Motivating Examples}
Quadratic optimization is a class of problems frequently employed in control system design and operation. As a first motivating example, consider the problem of estimating the state of a dynamical system from privacy-sensitive sensor measurements. For instance, such a problem arises in smart houses where the temperature or energy readings of the sensors are aggregated by a cloud controller and can reveal whether the owners are in the building. In particular, let the system dynamics and sensor measurements be described by:
\begin{align}
\begin{split}
	x_{t+1} &= A \, x_t + w_t  \\
	y_t &= C \, x_t + v_t, 
\end{split}
\end{align}
for $t =0,\ldots, T-1$, where $w_t, v_t$ are process and measurement noise respectively. The system and sensor parameters $A \in \mathbb{R}^{n \times n},C\in \mathbb{R}^{p \times n}$ can be thought as publicly available information while the sensor measurements $y_0, \dots, y_{T-1}$ are privacy-sensitive data. The untrusted cloud has to collect the measurements and output an initial state estimate $x_0$ of the system to a target agent, while maintaining the privacy of the sensor data and final output. A simple state estimate may be found as the solution to the least squares problem:
\begin{align}\label{eq:estimation_example}
	&\underset{x_0 \in \mathbb{R}^n }{\text{min}} \quad \frac{1}{2} \sum_{t=0}^\intercal \| y_t - C A^t\, x_0 \|^2_2  = 
	\frac{1}{2} \| {y} - \mathcal{O}\, x_0 \|^2_2,
\end{align}
where
\begin{equation} 
{y} = \left[ \begin{array}{c}
y_0 \\ y_1\\ \vdots \\ y_{T-1}
\end{array}\right], 
\; 
\mathcal{O} = \left[ \begin{array}{c}
C\\  CA\\ \vdots\\ CA^{T-1}
\end{array}\right]
\end{equation}
and $\mathcal{O} \in \mathbb{R}^{Tp \times n}$ is the well-known system observability matrix. 
More general state estimation problems may also include constraints, e.g., the initial state may lie in a polyhedral set $D x_0 \preceq b$ where the shape of the polyhedron captured by the matrix $D$ is publicly known but its position and size captured by the vector $b$ is private information.

As a second example, consider a control problem with privacy-sensitive objectives. Suppose we are interested in steering a dynamical system: 
\begin{align}\label{eq:system}
\begin{split}
	x_{t+1} &= A \, x_t +B \, u_t \\
	y_t &= C \, x_t,
\end{split}
\end{align}
starting from a private initial position $x_0$ while guaranteeing safety state constraints of the form:
\begin{align}
	\underline{x}_t \preceq x_t \preceq \overline{x}_t, t =1,\ldots, T,
\end{align}
where $\underline{x}_t , \overline{x}_t$ for $ t =1,\ldots, T $ are private values. 
Such problems arise when exploring vehicles are deployed in uncertain or hazardous environments or when different users compete against each other and want to hide their tactics. 

Denote by $x^r$ and $u^r$ the private values such that the system is guaranteed to track a reference~$r$. Then, we need to solve the problem:
\begin{align}\label{eq:control_example_1}
\begin{split}
	\underset{x_k \in \mathbb{R}^n, u_k \in \mathbb{R}^m }{\text{min}}~&~\frac{1}{2} \sum_{t=0}^{T-1} \|y_t-C\,x^r_t\|^2_Q + \|u_t-u^r_t\|^2_R \\
	\text{s.t.}~&~ x_{t+1} = A \, x_t +B \, u_t, \quad y_t = C\, x_t \\
	 &~\underline{x}_t \preceq x_t \preceq \overline{x}_t, \quad t =1,\ldots, T,
\end{split}
\end{align}
where $Q,R$ are positive definite cost matrices and we used the quadratic norm notation $\|z\|^2_P := z^\intercal P z$.

The publicly known system dynamics \eqref{eq:system} are equality constraints which can be eliminated to obtain a control input design problem with only linear inequality constraints:
\begin{align}\label{eq:control_example_2}
\begin{split}
	\underset{x_k \in \mathbb{R}^n, u_k \in \mathbb{R}^m }{\text{min}}~&~ \frac{1}{2}\sum_{t=0}^{T-1} u_t^\intercal \, H_t\,u_t + [\begin{array}{ccc} x_0 & x^r_t & u^r_t \end{array}] \,F_t \, u_t \\
	\text{s.t.}~&~\underline{x}_t \preceq A^t\, x_0 +\sum_{j=0}^{t-1} A^{t-1-j} B\, u_j  \preceq \overline{x}_t, \\
	&~t =1,\ldots, T,
\end{split}
\end{align}
with $H_t $ and $F_t$ are matrices that depend on the costs $Q,R$ and the system's dynamics~\eqref{eq:system} appropriately computed from~\eqref{eq:control_example_1}.

\subsection{Problem statement} 

The above examples can be modeled as constrained quadratic optimization problems with distributed private data. We consider three types of parties involved in the problem: a number of agents~$\mc A_i$, $i=1,\ldots,p$, a cloud server~$\mc C$ and a target node~$\mc T$. The purpose of this setup is to solve an optimization problem with the data provided by the agents and the computation performed on the cloud and send the result to the target node. The architecture is presented in Figure~\ref{fig:setup}.

Let us consider a strictly-convex quadratic optimization problem, which we assume to be feasible:
\begin{align}\label{eq:problem}
	\begin{split}
	x^\ast ~= ~\argmin\limits_{x\in\mathbb R^n}&~\frac{1}{2} x^\intercal \, Q \cloud \, x + c\agent^\intercal x\\
	 s.t.&~ A\cloud \, x\preceq b\agent,
	 \end{split}
\end{align}
where the variables and the parties to which they belong to are described as follows:\\
\noindent\textbf{Agents $\mc A = (\mc A_1,\ldots,\mc A_p)$}: The agents are parties with low computational capabilities that possess the private information $b\agent$ and $c\agent$. The private information is decomposed across the agents as: $b\agent = (b_1,\ldots,b_p)$ and $c\agent = (c_1,\ldots,c_p)$, with $b_i\in \mathbb R^{m_i}$ and $c_i\in \mathbb R^{n_i}$ being the private data of agent $i$ such that $\sum_{i=1}^p m_i = m$ and $\sum_{i=1}^p n_i = n$.\\
\noindent\textbf{Cloud $\mc C$}: The cloud is a party with high computational capabilities that has access to the matrices $Q\cloud\in \mathbb S^n_{++}$ and $A\cloud\in\mathbb R^{m\times n}$. 
When the computation is sophisticated and/or involves proprietary algorithms, $Q\cloud$ and $A\cloud$ are private data of the cloud. In order to capture a greater number of problems, we will also consider the case where $Q\cloud$ or $A\cloud$ are public.
\\
\noindent\textbf{Target Node $\mc T$}: The target node is a party with more computational capabilities than the agents that is interested in the optimal solution $x^\ast$ of the problem.  
The target node can be either one of the agents or a separate cloud server.

\begin{figure}[h]
  \centering
    \includegraphics[width=0.38\textwidth]{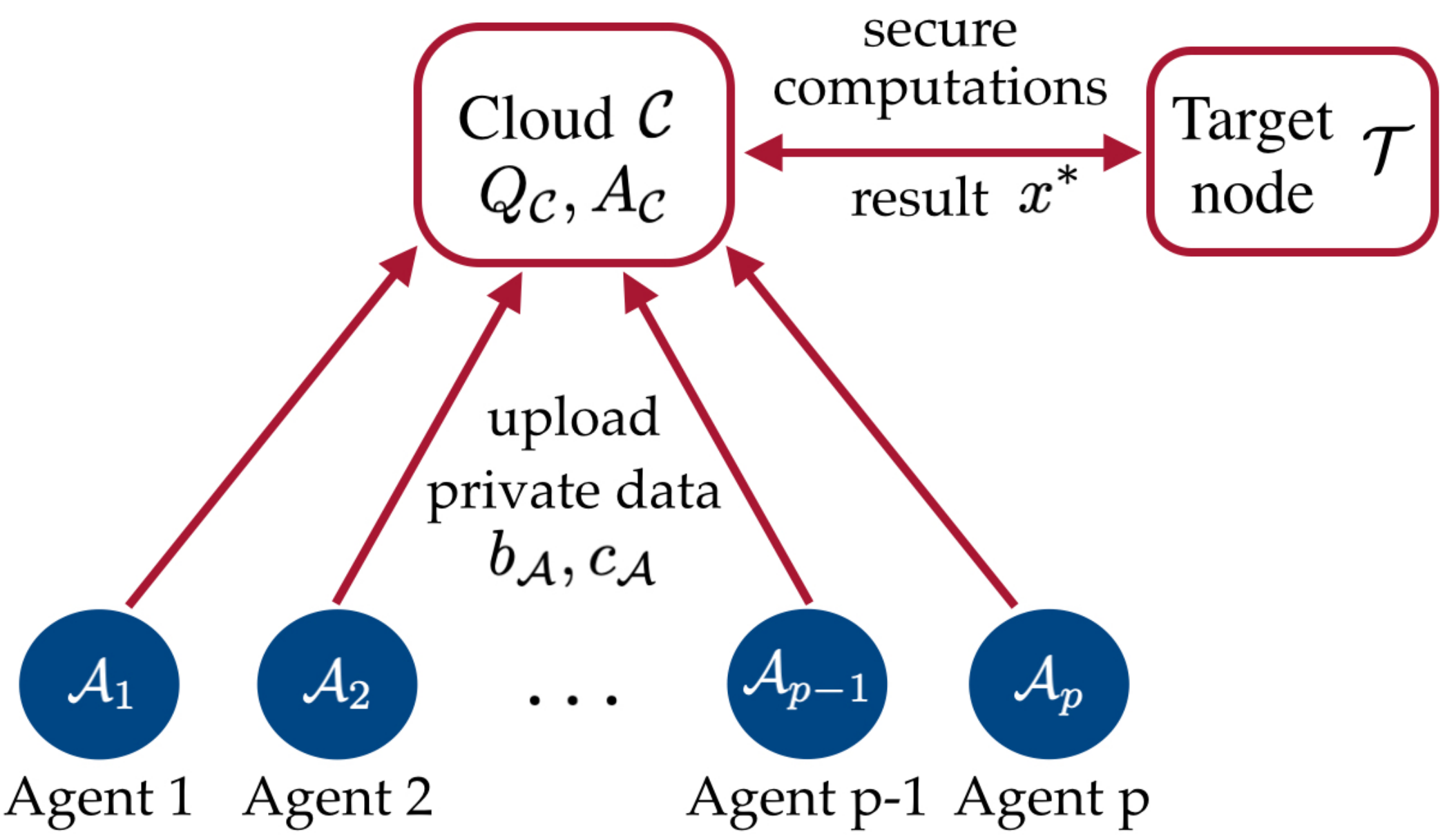}
  \caption{
  	Architecture of the problem: Agents are low-resource parties that have private data that they outsource to a powerful server, called the cloud. The cloud has to solve an optimization problem on the private data of the agents and send the result to a party called the target node, which will help with the computations.} 
   \label{fig:setup}
\end{figure}

Note that in the first motivating example~\eqref{eq:estimation_example}, the matrix $Q\cloud$ in~\eqref{eq:problem} corresponds to the publicly known matrix $\mathcal{O}^\intercal \mathcal{O}$ and the vector $c_{\mathcal{A}}$ corresponds to the private vector $-\mathcal{O}^\intercal \, {y}$. For the objective to be strongly convex, we require that $\text{rank}(\mathcal{O})=n$, which also implies standard system observability. Similarly, in the second example~\eqref{eq:control_example_2}, the matrix $Q\cloud$ is composed from the regulating cost matrices, the cost vector $c_{\mathcal{A}}$ is formed from the private initial conditions and steady-state solution for the reference tracking, mixed by the system's dynamics, and the constraints vector $b_{\mathcal{A}}$ depends on the private state bounds and initial condition.

\subsection{Adversarial model}

In most cloud applications, the service provider has to deliver the contracted service or otherwise the clients would switch to another service provider. This means that the cloud cannot alter the data it receives. Moreover, the clients' interest is to obtain the correct result from the service they pay for, hence, the agents and target node will also not alter the data. However, the parties can locally process the data they receive in any fashion they want. This model is known as \mbox{semi-honest}, which is defined as follows:

\begin{definition}(\mbox{Semi-honest model})
A party is \mbox{semi-honest} if it does not deviate from the steps of the protocol but may store the transcript of the messages exchanged and process the data received in order to learn more information than stipulated by the protocol.
\end{definition}

This model also holds when considering eavesdroppers on the communication channels. An adversarial model that only considers eavesdroppers as adversaries differs from the \mbox{semi-honest} model by the fact that, apart from the untrusted channels, the parties that perform the computations are also not trusted. Malicious and active adversaries -- that diverge from the protocols or tamper with the messages -- are not considered in this paper.

The purpose of the paper is to solve Problem~\eqref{eq:problem} using a secure multi-party computation protocol for semi-honest parties. This protocol takes as inputs the private data of the agents, as well as the cloud's data, and involves the parties in exchanging messages and participating in some specified computations, and eventually outputs to the target node the solution of the optimization problem. This protocol should guarantee \textit{computational security}. More specifically, the cloud cannot obtain any information about the private inputs of the agents and the output of the target node and similarly, the target node cannot obtain any information about the private inputs of the agents and the cloud, other than what they can compute using their inputs and outputs and public information, by running a computationally efficient algorithm after the execution of the protocol.
%


\section{Privacy goals}\label{sec:privacy_goals}

We introduce some notion necessary to the security definitions. In what follows, $\{0,1\}^\ast$ defines a sequence of bits of unspecified length. An ensemble $X = \{X_n\}_{n\in\mathbb N}$ is a sequence of random variables ranging over strings of bits of length polynomial in $n$, denoted by $\text{poly}(n)$, arising from distributions defined over a finite set $\Omega$. By positive polynomials, we refer to the set of polynomials that on positive inputs have positive values.

\begin{definition}(Statistical indistinguishability~\cite[Ch.~3]{Goldreich03foundations|})\label{def:stat_ind}
The ensembles $X=\{X_n\}_{n\in\mathbb N}$ and $Y=\{Y_n\}_{n\in\mathbb N}$ are \textbf{statistically indistinguishable}, denoted $\stackrel{s}{\equiv}$, if for every positive polynomial $p$, and all sufficiently large $n$:
\[\frac{1}{2}\sum_{\alpha\in\Omega} \left |\text{Pr}[X_n = \alpha] - \text{Pr}[Y_n = \alpha] \right |< \frac{1}{p(n)},\]
where the quantity on the left is called the statistical distance between the two ensembles.
\end{definition}

It can be proved that two ensembles are statistically indistinguishable if no algorithm can distinguish between them. Computational indistinguishability is a weaker notion of the statistical version, as follows:

\begin{definition}\label{def:comp_ind}(Computational Indistinguishability~\cite[Ch.~3]{Goldreich03foundations|})
The ensembles $X=\{X_n\}_{n\in\mathbb N}$ and $Y=\{Y_n\}_{n\in\mathbb N}$ are \textbf{computationally indistinguishable}, denoted $\stackrel{c}{\equiv}$, if for \textbf{every probabilistic polynomial-time} algorithm $D:\{0,1\}^\ast\rightarrow \{0,1\}$, called the distinguisher, every positive polynomial $p$, and all sufficiently large $n$, the following holds:
\[\big |\text{Pr}_{x\leftarrow X_n} [D(x) = 1] - \text{Pr}_{y\leftarrow Y_n}[D(y) = 1] \big| < \frac{1}{p(n)}.\]
\end{definition}

The previous definition is relevant when defining the privacy goals to be guaranteed by the protocol: \textbf{\mbox{two-party} privacy} of the sensitive data of the parties. 
The intuition is that a protocol privately computes a functionality if nothing is learned after its execution, i.e., if all information obtained by a party after the execution of the protocol (while also keeping a record of the intermediate computations) can be obtained only from the inputs and outputs available to that party. 

\begin{definition}\label{def:view}(Two-party privacy w.r.t. \mbox{semi-honest} behavior~\cite[Ch.~7]{Goldreich04foundations||}) 
 Let $f:\{0,1\}^\ast \times \{0,1\}^\ast \rightarrow \{0,1\}^\ast \times \{0,1\}^\ast$ be a functionality, and $f_1(x_1,x_2)$, $f_2(x_1,x_2)$ denote the first and second components of $f(x_1,x_2)$, for any inputs $x_1,x_2\in\{0,1\}^\ast$. Let $\Pi$ be a \mbox{two-party} protocol for computing $f$. The \textbf{view} of the $i$-th party ($i=1,2$) during an execution of $\Pi$ on the inputs $(x_1,x_2)$, denoted $V^\Pi_i (x_1,x_2)$, is $(x_i,coins,m_1,\ldots,m_t)$, where $coins$ represents the outcome of the $i$'th party's internal coin tosses, and $m_j$ represents the $j$-th message it has received. For a deterministic functionality $f$, we say that $\Pi$ \textbf{privately computes} $f$ if there exist probabilistic polynomial-time algorithms, called \textbf{simulators}, denoted $S_i$, such that:
 \[\{S_i(x_i,f_i(x_1,x_2)) \}_{x_{1,2}\in\{0,1\}^{\ast}} \stackrel{c}{\equiv} \{V_i^\Pi (x_1,x_2)\}_{x_{1,2}\in\{0,1\}^\ast}.\]
 \end{definition}
  
This definition assumes the correctness of the protocol, i.e., the probability that the output of the parties is not equal to the result of the functionality applied to the inputs is negligible~\cite{Lindell17}. 

For protocols that involve more than two parties, Definition~\ref{def:view} can be extended to \textbf{multi-party privacy}, by also taking into consideration coalitions of semi-honest parties. The next definition states that multi-party protocol privately computes the functionality it runs if everything that a coalition of parties can obtain from participating in the protocol and keeping records of the intermediate computations can be obtained only from the inputs and outputs of these parties.

\begin{definition}\label{def:coalition_view}(Multi-party privacy w.r.t. \mbox{semi-honest} behavior~\cite[Ch.~7]{Goldreich04foundations||}) 
 Let $f:(\{0,1\}^\ast)^n \rightarrow (\{0,1\}^\ast)^n$ be a $n$-ary functionality, where $f_i(x_1,\ldots,x_n)$ denotes the $i$-th element of $f(x_1,\ldots,x_n)$. Denote the inputs by $\bar x = (x_1,\ldots,x_n)$. For $I=\{i_1,\ldots,i_t\}\subset [n]=\{1,\ldots,n\}$, we let $f_I(\bar x)$ denote the subsequence $f_{i_1}(\bar x),\ldots,$ $f_{i_t}(\bar x)$, which models a coalition of a number of parties. Let $\Pi$ be a $n$-party protocol that computes $f$. The \textbf{view} of the $i$-th party during an execution of $\Pi$ on the inputs $\bar x$, denoted $V^\Pi_i (\bar x)$, is defined as in Definition~\ref{def:view}, and we let the view of a coalition be denoted by $V_I^\Pi (\bar x) = (I,V_{i_1}^\Pi(\bar x),\ldots,V_{i_t}^\Pi(\bar x))$. For a deterministic functionality $f$, we say that $\Pi$ \textbf{privately computes} $f$ if there exist simulators $S$, such that, for every $I\subset [n]$, it holds that, for $\bar x_t = (x_{i_1},\ldots,x_{i_t})$:
 \begin{align*}
\{S(I,(\bar x_t),f_I(\bar x_t)) \}_{\bar x \in (\{0,1\}^\ast)^n} &\stackrel{c}{\equiv} 
\{V_I^\Pi (\bar x)\}_{\bar x \in (\{0,1\}^\ast)^n}.
\end{align*}
\end{definition}

Further underlying details related to the cryptographic notion of security and its interpretation are presented in Appendix~\ref{subsec:appendix_security}.
%


\section{Encryption scheme}\label{sec:encryption_sch}

\subsection{Additively Homomorphic Encryption}\label{subsec:ahe}

Let $E(\cdot)$ define a generic encryption primitive, with domain the space of private data, called plaintexts, and codomain the space of encrypted data, called ciphertexts. For probabilistic encryption schemes, the encryption primitive also takes as input a random number. The decryption primitive $D(\cdot)$ is defined on the space of ciphertexts and takes values in the space of plaintexts. Additively homomorphic schemes satisfy the property that there exists an operator $\oplus$ defined on the space of ciphertexts such that: 
\begin{equation}~\label{eq:abstract_PHE}
E(a)\oplus E(b) \subset E(a+b),
\end{equation}
for any plaintexts $a,b$ supported by the scheme. Here, we use set inclusion instead of equality because the encryption of a message is not unique in probabilistic cryptosystems. 
Intuitively, equation~\eqref{eq:abstract_PHE} means that performing this operation on the two encrypted messages, we obtain a ciphertext that is equivalent to the encryption of the sum of the two plaintexts. Formally, the decryption primitive $D(\cdot)$ is a homomorphism between the group of ciphertexts with the operator $\oplus$ and the group of plaintexts with addition $+$, which justifies the name of the scheme. It is immediate to see that if a scheme supports addition between encrypted messages, it will also support subtraction, by adding the additive inverse, and multiplication between an integer plaintext and an encrypted message, obtained by adding the encrypted messages for the corresponding number of times. 

In this paper, we use the popular Paillier encryption, but any other additively homomorphic encryption scheme can be employed. The \textbf{Paillier cryptosystem}~\cite{Paillier99} is an asymmetric additively homomorphic encryption scheme. Asymmetric or public key cryptosystems involve a pair of keys: a public key that is disseminated publicly, and which is used for the encryption of the private messages, and a private key which is known only to its owner, used for the decryption of the encrypted messages. In this section, we will only present the notation for the encryption and for the allowed operations between the encrypted messages and postpone the implementation details to Appendix~\ref{subsec:appendix_paillier}. 

In the Paillier encryption scheme, the private data are elements of the ring of integers modulo $N$, denoted by $\mathbb Z_N$, where $N$ is a large integer of $\sigma$ bits, called the Paillier modulus. The encrypted messages take values in the positive integers modulo $N^2$, denoted by $\mathbb Z^\ast_{N^2}$. Paillier is a probabilistic encryption scheme, which means that the encryption primitive also takes as an argument a random number which, for simplicity, we avoid in this notation. For a plaintext message $a$, we denote the Paillier encryption by $[[a]]:=E(a)$, for an instance of the random number. For readability, we will use throughout the paper the following abstract notation for the operations on the encrypted space:
\begin{align}\label{eqref:hom_prop}
\begin{split}
	[[a]]\oplus[[b]] &\stackrel{d}{=} [[a+b]]\\
	b\otimes[[a]] &\stackrel{d}{=} [[ba]],~~ \text{ for plaintexts } a,b\in\mathbb Z_N,
\end{split}
\end{align}
where $\stackrel{d}{=}$ means that the equality holds after applying the decryption primitive on both sides~\cite{Paillier99}. The expressions for the abstracted operations are described in Appendix~\ref{subsec:appendix_paillier}.
We will slightly abuse these notations to denote encryptions of vectors and additions and multiplication by vectors and matrices. In our protocol, the target node $\mc T$ is the owner of a pair of Paillier keys. Hence, for everything that follows, $[[\cdot]]$ denotes the encryption with the target node's public key $pk\target$. 

\emph{Example: Private unconstrained quadratic optimization.} 
In order to illustrate the usage of the Paillier encryption scheme for computations on encrypted data, consider the following example of an unconstrained optimization problem:
\begin{equation}~\label{eq:unconstrained}
	\min\limits_{x} f(x) = \frac{1}{2}x^\intercal Q\cloud x + c\agent^\intercal x,
\end{equation}
where the variables have the same meaning as in Problem~\eqref{eq:problem} described in Section~\ref{sec:problem_setup}. The cloud has access to the matrix $Q\cloud$. If the cloud also had access to private data $c\agent$ from the agents, the gradient method could be employed to obtain the optimal unencrypted solution of Problem~\eqref{eq:unconstrained}. An iteration of the gradient method has the form:
\[x_{k+1} = x_k - \eta\nabla f(x_k) = x_k - \eta Q\cloud x_k - \eta c\agent,\]
where $\eta>0$ is the step-size chosen by the cloud and $k=0,\ldots,K-1$ for a number of iterations $K$. However, in order to protect their private data and the privacy of the solution $x^\ast$, the agents send the value $c\agent$ encrypted to the cloud as $[[c\agent]]$. 
It is very important to notice that \textit{for a quadratic objective function, only linear operations in the private data $x_k$ and $c\agent$ are required in order to compute one iteration of the gradient descent algorithm}. Hence, by taking advantage of the additively homomorphic property~\eqref{eqref:hom_prop}, the cloud can locally compute the optimal solution by the gradient descent in the encrypted domain, for all $k = 0,\ldots,K-1$:
\begin{equation}\label{eq:unconstrained_iter}
[[x_{k+1}]] = (I-\eta Q\cloud)\otimes[[x_k]]\oplus(-\eta)\otimes [[c\agent]],
\end{equation}
and send the final point $[[x_K]]$ to the target node to decrypt and obtain the desired result. 
Such a protocol satisfies the desired security according to Definition~\ref{def:view} provided in Section~\ref{sec:privacy_goals} by the fact that the cloud only has access to data encrypted by a semantically secure encryption scheme, such as Paillier~\cite{Paillier99}. More details are provided in Section~\ref{sec:security_proof}.

However, the optimization problem considered in this paper is a quadratic problem with constraints, which introduce nonlinear operations in the optimization algorithm and cannot be handled inherently by the Paillier cryptosystem. We will describe in Section~\ref{sec:gradient_ascent} how we leverage communication between the parties in order to achieve privacy-preserving nonlinear operations.

\subsection{Symmetric encryption scheme}\label{subsec:symmetric}
Apart from the (public key) partially homomorphic cryptosystem above, we will also use a specific symmetric key cryptosystem for reasons that will become apparent in the protocol, related to hiding the data from the target node. This symmetric key cryptosystem is similar to the well-known one-time pad~\cite{Vernam1926,Bellovin11}, and can be thought of as additively blinding a private value by noise. In contrast to public key cryptography, symmetric key algorithms perform the encryption and decryption with the same key. For messages of $l$ bits, the key is generated as $sk\in \mathbb Z$ with length of $\lambda+l$ bits, where $\lambda$ is the security parameter. The encryption primitive is $E'(a) = a + sk$, with $a\in [0,2^l)\cap \mathbb Z$, and the decryption is obtained as $a = D'(E'(a)) = E'(a) - sk$. This is also called two-out-of-two additive secret sharing~\cite{Dodis2007lecture} when the addition is on an abelian group. The security of this scheme lies on generating a uniformly random key and on using this key for encryption only once, which yields that the distribution of $E'(a)$ is statistically indistinguishable from a random number sampled of $l+\lambda+1$ bits. For this reason, $\lambda$ is called the statistical security parameter. We also notice that \textit{this symmetric cryptosystem is compatible with the Paillier cryptosystem, in the sense that the two encryptions commute}: $D'(E(E'(a)))) = E(a)$ by using $E(sk)$ instead of $sk$ for decryption, where $E'(a)$ is performed on the message space of the Paillier cryptosystem. Such a scheme is commonly employed for blinding messages, for instance in~\cite{Veugen10,Jeckmans13,Bost15}.

For simplicity, in the protocols described in Section~\ref{sec:gradient_ascent}, we will directly use the summation with a random number to depict the above symmetric key cryptosystem, respectively the difference by the same random number for decryption.

The strength of a cryptosystem relies on the computational intractability of retrieving the private key from the public information -- an adversary holding the public information cannot find the private key by brute force computations. Public key cryptosystems like the Paillier cryptosystem involve a security parameter $\sigma$, which is usually the length of the modulus of the public key, and symmetric key cryptosystems involve a security parameter $\lambda$, which is the bit-length of the key. As is common in practice, increasing the size of the security parameters to values larger than, say 1024 bits for $\sigma$ and 80 bits for $\lambda$, increases the security of the system against any computationally efficient inference attack.


\section{Secure Constrained Quadratic Optimization}\label{sec:gradient_ascent}

In this section we introduce our main algorithmic results. Let us first describe the optimization algorithm used for solving the minimization problem~\eqref{eq:problem} on unencrypted data.

For strongly convex problems, one can resort to duality theory~\cite[Ch.~5]{BV04convex} to compute the projection on the feasible set, and be able to retrieve the optimal value of the primal problem from the optimal value of the dual problem. 
For the quadratic optimization problem \eqref{eq:problem}, its dual is also a quadratic optimization problem:
\begin{align*}
	\mu^\ast = \argmax\limits_{\mu\in\mathbb R^m}&~-\frac{1}{2} (A\cloud^\intercal \mu + c\agent)^\intercal Q\cloud^{-1}(A\cloud^\intercal\mu + c\agent) - \mu^\intercal b\agent\\
	 s.t.&~ \mu \succeq 0.\numberthis\label{eq:dual}
\end{align*}
The dual objective function is denoted by $g(\mu)$ and its gradient is equal to: 
\begin{equation}\label{eq:grad}
\nabla g(\mu) = -A\cloud Q\cloud^{-1} (A\cloud^\intercal\mu + c\agent) - b\agent.
\end{equation}

Under standard constraint qualifications, e.g., Slater's condition~\cite[Ch.~5]{BV04convex}, strong duality between the primal and dual holds, which means the optimal objective in the primal problem~\eqref{eq:problem} is equal to the objective in the dual problem~\eqref{eq:dual}. Moreover, the optimality conditions (Karush-Kuhn-Tucker) hold and are the following:
\begin{align}
	Q\cloud x^\ast + A\cloud^\intercal \mu^\ast + c\agent &= 0 \label{eq:KKT1}\\
	A\cloud x^\ast - b\agent &\preceq 0,~~\mu^\ast \succeq 0 \label{eq:KKT23}\\
	\mu_i^\ast(a_i^\intercal x^\ast - b_i) &= 0, ~i = 1,\ldots,m \label{eq:KKT4}.
\end{align}

For strictly convex problems, i.e., $Q\cloud\in\mathbb S^n_{++}$, the optimal solution of the primal problem can be obtained from \eqref{eq:KKT1} as
\[x^\ast = - Q\cloud^{-1}(A\cloud^\intercal \mu^\ast + c\agent).\]

An algorithm for computing the optimum in problem~\eqref{eq:dual}, which we will show is also compatible with the partially homomorphic encryption of Section~\ref{sec:encryption_sch}, is the projected gradient ascent method. The projected gradient ascent is composed by iterations of the following type:
\begin{equation}\label{eq:projection}
	\mu_{k+1}=\max\{\mb 0,\mu_k + \eta\nabla g(\mu_k)\},
\end{equation}
where $\eta>0$ is the step size and $\mu_{k+1}$ is the projected value of $\mu_k + \eta\nabla g(\mu_k)$ over the non-negative orthant. For full rank of $A\cloud Q^{-1}\cloud A\cloud^\intercal$, the dual problem is strictly convex and the algorithm converges with a linear rate~\cite{nesterov13book} for a fixed step size $\eta = \frac{1}{L}$, where $L = \lambda_{max}(A\cloud Q^{-1}\cloud A\cloud^\intercal)$. For \mbox{non-strictly} convex dual function, the gradient ascent algorithm converges in sublinear time~\cite{nesterov13book}. 

\subsection{Projected gradient ascent on encrypted data}\label{sec:encrypted_gradient}

As stated in Section~\ref{sec:problem_setup}, we aim to solve an optimization problem outsourced to the cloud on private distributed data from the agents and send the result to the target node. To protect the agents' data, we use an encryption scheme that allows the cloud to perform linear manipulations on encrypted data, as described in Section~\ref{sec:encryption_sch}. To this end, the target node generates a pair of keys $(pk\target, sk\target)$ and distributes the public key to the agents and the cloud, enabling them to encrypt their data, which only the target node will be able to decrypt, using the private key. We consider that all the data is represented on integers of~$l$ bits and comment on this further in Section~\ref{subsec:quantization}.

The main difficulty in performing the projected gradient ascent on encrypted data is performing iteration~\eqref{eq:projection}. We have already seen in the example in Section~\ref{subsec:ahe} that the update of the iterate in the direction of the gradient ascent can be computed locally by the cloud directly on the encrypted data~\eqref{eq:unconstrained_iter}. However a first challenge lies in performing the comparison with zero. Due to the probabilistic nature of the Paillier encryption scheme, \textit{the order on the plaintext space is not preserved on the ciphertext space} and comparison on encrypted data cannot be performed locally by the cloud. Moreover, after the comparison is performed, the update of the encrypted iterate~\eqref{eq:projection} has to be done in a private way, so that the result of the maximum operation is not revealed to any of the parties involved (the cloud and the target node). These two steps are the main computational bottleneck in the protocol we propose, as both require secure communication between the cloud and the target node. A preliminary version of this solution was presented in~\cite{Alexandru17}.

We can privately achieve the two steps mentioned above in three stages. First, the cloud has to randomize the order of the two encrypted variables it wants to compare (Protocol~\ref{alg:rand_step}). Second, the cloud and target engage in an interactive comparison protocol that takes as inputs the two randomized variables and outputs the result of the comparison to the target node (Protocol~\ref{alg:comp_DGKV}). Third, the update of the dual iterate is achieved through an interactive protocol between the cloud and target node, which takes as inputs the two randomized variables and the result of the comparison and outputs to the cloud the updated iterate (Protocol~\ref{alg:update_step}). Throughout this paper, by comparison we mean element-wise comparison, since the variable $\mu$ is a vector.

\subsection{Secure comparison protocol}\label{subsec:comparison}
In order to privately compute~\eqref{eq:projection}, i.e., hide the result from \textit{all} the parties involved, we want to keep the result of the comparison of the updated iterate $\mu_k+\eta\nabla g(\mu_k)$ with zero unknown to both the cloud and the target node. The comparison protocol will reveal the result of the comparison between the two inputs to the target node $\mc T$. However, if we introduce an additional step where $\mc C$ randomizes the order of the two values that it wants to compare, then $\mc T$ does not learn any information by knowing the result of the comparison.

\begin{protocol}[Randomization step]
  \label{alg:rand_step}
  \begin{algorithmic}[1]
\small
 \Require{$\mc C$: $[[\bar \mu]],[[0]]$, where $\bar\mu:= \mu + \eta\nabla g(\mu)$}
 \Ensure{$\mc C$: $[[a]],[[b]]$}
 \State $\mc C$: choose a random permutation $\pi$ on two elements
\State $\mc C$: output $[[a]],[[b]]\leftarrow \pi([[0]],[[\bar\mu]])$
\end{algorithmic}
\end{protocol}

Next, we will demonstrate how the comparison protocol works via the following instantiation. Damg{\aa}rd, Geisler and Kr{\o}igaard introduced a protocol in~\cite{DGK07,DGK09correction} for secure comparison of two private inputs of different parties, which we will call the DGK protocol. To this end, they also propose an additively homomorphic encryption scheme, which has the property that checking if the value zero is encrypted is more efficient than simply decrypting, which is useful for comparisons and working with bits. An extension of this protocol to the case where none of the parties knows the two values to be compared, which is of interest to us, and some improvements in terms of efficiency were proposed in~\cite{Veugen12}. 

\begin{protocol}[Protocol for secure \mbox{two-party} comparison with two encrypted inputs using DGK \cite{DGK07,Veugen12}]
\label{alg:comp_DGKV}
\begin{algorithmic}[1]
\small
 \Require{$\mc C$: $[[a]],[[b]]$; $\mc T$: $sk\target,sk_{DGK}$}
 \Ensure{$\mc T$: bit $t$: $(t=1)\Leftrightarrow (a \leq b)$}
\State $\mc C$: choose random number $\rho$
\State $\mc C$: $[[z]] \gets [[b]]\oplus(-1)\otimes[[a]]\oplus[[2^l+\rho]]$, send $[[z]]$ to $\mc T$
\Statex \Comment{$z\gets b-a+2^l +\rho$}
\State $\mc T$: decrypt $[[z]]$
\State $\mc C$: $\alpha\gets \rho\mod 2^l$
\State $\mc T$: $\beta\gets z\mod 2^l$
\State $\mc C$,$\mc T$: perform a comparison protocol, e.g, DGK, such that $\mc C$ gets $\delta\cloud$ and $\mc T$ gets $\delta\target$ with $\delta\cloud\veebar\delta\target = (\alpha\leq\beta)$ \Comment $\veebar$ denotes the exclusive or operation
\State $\mc T$: encrypt $[[z\div 2^l]]$ and $[[\delta\target]]$ and send them to $\mc C$
\State $\mc C$: $[[t']] \leftarrow \begin{cases} [[\delta\target]] &\text{ if } \delta\cloud = 1 \\ [[1]]\oplus(-1)\otimes[[\delta\target]] &\text{ otherwise}  \end{cases}$
\State $\mc C$: $[[t]]\gets [[z\div 2^l]]\oplus(-1)\otimes ([[\rho\div 2^l]]\oplus [[t']])$ 
\Statex \Comment{$t \gets z\div 2^l - \rho\div 2^l - t'$}
\State $\mc C$: send $[[t]]$ to $\mc T$
\State $\mc T$: decrypts $[[t]]$
\end{algorithmic}
\end{protocol}
 
The comparison protocol that will be used in our optimization protocol is as follows. Let $\mc C$ have two encrypted values under the Paillier scheme $[[a]]$ and $[[b]]$ that it obtained after running Protocol~\ref{alg:rand_step}, and let $\mc T$ have the decryption key. Furthermore, let $\mc T$ also have the decryption key of the DGK homomorphic encryption scheme, which we describe in Appendix~\ref{app:comparison}. 
At the end of the protocol, $\mc T$ will have the result of the comparison in the form of one bit $t$ such that $(t=1)\Leftrightarrow (a\leq b)$. Let $l$ denote the number of bits of the unencrypted inputs $a,b$.
Protocol~\ref{alg:comp_DGKV} is based on the fact that the most significant bit of $(b-a+2^l)$ is the bit that indicates if $(a\leq b)$.

The security of this protocol is proved in \cite{DGK07,DGK09correction,Veugen12} and the proof is omitted here.
 
\begin{proposition}\label{prop:comp_DGK}
	Protocol~\ref{alg:comp_DGKV} is secure in the \mbox{semi-honest} model, according to Definition~\ref{def:view}.
\end{proposition}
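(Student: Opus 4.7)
The plan is to exhibit probabilistic polynomial-time simulators $S_{\mc C}$ and $S_{\mc T}$ whose outputs are computationally indistinguishable from the real views of the two parties, as required by Definition~\ref{def:view}. The ingredients I would rely on are (i) semantic security of the Paillier scheme, (ii) semantic security of the DGK scheme, (iii) statistical hiding of $b - a + 2^l$ by the mask $\rho$ provided $\rho$ is drawn uniformly from an interval of length $2^{l+\lambda}$, and (iv) sequential composition with the inner DGK sub-protocol invoked in step 6, whose own semi-honest security is assumed from \cite{DGK07,DGK09correction,Veugen12}.

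For $S_{\mc C}$: the cloud has no protocol output, and its view comprises its coins ($\rho$ and the bit determining the permutation used earlier), the transcript of the inner sub-protocol terminating in $\delta_{\mc C}$, and the Paillier ciphertexts $[[z \div 2^l]]$ and $[[\delta_{\mc T}]]$ it later receives. Since $\mc C$ holds neither $sk_{\mc T}$ nor $sk_{DGK}$, I would replace every received ciphertext by a fresh encryption of $0$, and replace the transcript of the inner sub-protocol by the output of its guaranteed simulator invoked on $\mc C$'s input $\alpha = \rho \bmod 2^l$ and output $\delta_{\mc C}$. A standard hybrid argument, switching each ciphertext one at a time and reducing to semantic security of the corresponding scheme, shows that the real and simulated views are computationally indistinguishable.

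For $S_{\mc T}$: the target's output is the bit $t$ and its view comprises its keys, the ciphertext $[[z]]$ (which it decrypts to $z = b - a + 2^l + \rho$), the inner sub-protocol transcript ending in $\delta_{\mc T}$, and the ciphertext $[[t]]$. The simulator samples $\tilde{z}$ uniformly from $[2^l,\, 2^l + 2^{l+\lambda})$, samples $\tilde\delta_{\mc T}$ uniformly from $\{0,1\}$, runs the inner sub-protocol's simulator on these to obtain the transcript, encrypts $\tilde z$ under $pk_{\mc T}$ to supply the first message, and finally supplies a fresh encryption of $t$ as $[[t]]$. Indistinguishability rests on two points: first, because $\rho$ is drawn uniformly from a range $2^\lambda$ times longer than the range of $b - a$, the statistical distance between the real $z$ and $\tilde z$ is at most $2^{-\lambda}$; second, the inner sub-protocol simulator absorbs all dependencies on $\alpha,\beta$ by its own security guarantee.

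The main obstacle is demonstrating that the joint distribution of $(\tilde z,\tilde\delta_{\mc T})$ produced by $S_{\mc T}$ is close to the real joint distribution \emph{conditioned on the output bit} $t$, because in the real world $t$ is an arithmetic function of $z \div 2^l$, $\rho \div 2^l$, $\delta_{\mc C}$ and $\delta_{\mc T}$, so these quantities are not independent. The correct argument shows that once one averages over the $\lambda$-bit slack in $\rho$ and over $\mc C$'s private $\delta_{\mc C}$, the conditional distribution of $(z,\delta_{\mc T})$ given $t$ is statistically close to the simulator's unconditioned distribution. With this in hand, the overall proof assembles as a hybrid argument that walks from the real view to the simulated one by swapping one ciphertext or one sub-protocol transcript at a time, each swap being justified either by semantic security or by the inner sub-protocol's simulator, yielding the required $\stackrel{c}{\equiv}$ relation of Definition~\ref{def:view}.
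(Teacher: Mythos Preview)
The paper does not actually prove this proposition: immediately before the statement it writes ``The security of this protocol is proved in \cite{DGK07,DGK09correction,Veugen12} and the proof is omitted here.'' So there is no in-paper argument to compare against; the authors simply defer to the cited works.

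Your sketch is a reasonable reconstruction of what such a proof looks like, and the overall strategy (separate simulators for $\mc C$ and $\mc T$, semantic security for the ciphertexts, statistical hiding for $z$, and black-box use of the inner DGK simulator) is the standard one used in those references. A couple of small points are worth tightening. First, the permutation bit you list among $\mc C$'s coins is not part of Protocol~\ref{alg:comp_DGKV}; it belongs to Protocol~\ref{alg:rand_step}, so it should not appear in this proof. Second, the range you give for $\tilde z$ is slightly off: since $b-a\in(-2^l,2^l)$ and $\rho$ has $l+\lambda$ bits, the real $z$ lives in an interval of length roughly $2^{l+\lambda}+2^{l+1}$, not exactly $[2^l,2^l+2^{l+\lambda})$; the statistical-distance bound still goes through, but the simulator should sample from the correct support. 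Third, the ``main obstacle'' you flag has a clean resolution you can state explicitly: $\delta\target=\delta\cloud\veebar(\alpha\leq\beta)$ with $\delta\cloud$ a fresh uniform bit independent of $\rho$, so $\delta\target$ is exactly uniform and independent of $z$ for every fixed $(a,b)$, hence also conditioned on $t$. With that observation the joint simulation of $(\tilde z,\tilde\delta\target)$ is justified without further averaging arguments.
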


\subsection{Secure update protocol}\label{subsec:update}
Moreover, we need to ensure that when the cloud $\mc C$ updates the value of the dual iterate at iteration $k+1$ in equation~\eqref{eq:projection}, it does not know the new value. The solution is to make the cloud blind the values of $[[a]]$ and $[[b]]$ and send them to the target node in this order, where the latter selects the value accordingly to the comparison result and then sends it back to the cloud. However, there are two important issues that have to be addressed in order for the update step to not leak information about the sign of the iterate: the blinding should be additive and effectuated with different random values, and the ciphertexts should be refreshed. The reasons are the following: if the blinding is multiplicative, by decrypting the product, the target knows which one of the values is zero. Moreover, if the two values are additively blinded with the same random value, the target can subtract them and reveal at least if the value is zero. Re-randomization of the encryptions is necessary so that the cloud cannot simply compare $[[a]]$ and $[[b]]$ with the received value. This can be done by adding an encryption of zero or by decryption followed by encryption. 
Protocol~\ref{alg:update_step} is the solution to the update problem:

\begin{protocol}[Secure update of the dual variable]
  \label{alg:update_step}
  \begin{algorithmic}[1]
\small
 \Require{$\mc C$: $[[a]],[[b]]$; $\mc T$: $t$ such that $(t=1)\Leftrightarrow(a\leq b)$}
 \Ensure{$\mc C$: $[[\mu]]$}
 \State $\mc C$: choose two random numbers $r,s$
 \State $\mc C$: $[[\bar a]] \leftarrow [[a]]\oplus [[r]], [[\bar b]] \leftarrow [[b]]\oplus [[s]]$
 \State $\mc C$: send $[[\bar a]]$ and $[[\bar b]]$ to $\mc T$
 \If{$t=0$} {~$\mc T$: $[[v]]\leftarrow \widetilde{[[\bar a]]} = [[\bar a]] + [[0]]$}\Else  {~$\mc T$: $[[v]]\leftarrow \widetilde{[[\bar b]]}=[[\bar b]] + [[0]]$}\EndIf \Comment Refresh the ciphertext
 \State $\mc T$: send $[[v]]$ and $[[t]]$ to $\mc C$
 \State $\mc C$: $[[\mu]]\gets [[v]]\oplus r\otimes[[t]]\oplus [[-r]] \oplus (-s)\otimes [[t]]$ 
 \Statex \Comment{$\mu\gets v + r(t-1) - st$}
\end{algorithmic}
\end{protocol}

The intuition behind the protocol is that if $a\leq b$, then $t=1$ and we obtain $\mu = \bar b - s = b$, and otherwise, $t=0$ and we obtain $\mu = \bar a - r = a$. In both cases the cloud correctly updates the dual variable with the projected value.

\subsection{Protocol for solving strictly-convex quadratic problems}\label{subsec:protocol}

Having defined these protocols, we can now build a protocol that represents one iteration \eqref{eq:projection} of the dual projected gradient ascent method. 

\begin{protocol}[Secure iteration of the dual projected gradient ascent method]
  \label{alg:iteration}
  \begin{algorithmic}[1]
\small
 \Require{$\mc C$: $A\cloud\in \mathbb R^{m\times n}, Q\cloud\in \mathbb S^n_{++}, [[b\agent]], [[c\agent]],\eta > 0, [[\mu_k]]$; $\mc T$: $sk\target  $}
 \Ensure{$\mc C$: $[[\mu_{k+1}]]$}
	\State $\mc C$: $[[\nabla g(\mu_k)]] \gets (-A\cloud Q\cloud^{-1} A\cloud^\intercal)\otimes[[\mu_k]] \oplus (-A\cloud Q\cloud^{-1})\otimes[[c\agent]]\oplus (-1)\otimes [[b\agent]]$ 
	\Statex \Comment Compute the encrypted gradient as in~\eqref{eq:grad}
	\State $\mc C$: $[[\bar \mu_k]] \gets [[\mu_k]] \oplus \eta\otimes [[\nabla g(\mu_k)]]$ \Comment Update the value in the ascent direction
	\State $\mc C, \mc T$ truncate $[[\bar \mu_k]]$ to $l$ bits
	\State $\mc C$ execute Protocol~\ref{alg:rand_step}: $\mc C$ gets $[[a_k]],[[b_k]]$ 
 \Comment Randomly assign $[[a_k]],[[b_k]]$ with values of $[[\bar \mu_k]], [[0]]$
	\State $\mc C, \mc T$ execute Protocol~\ref{alg:comp_DGKV} element-wise on inputs $[[a_k]],[[b_k]]$: $\mc T$ gets $t_k$ 
\Comment Secure comparison protocol
	\State $\mc C, \mc T$ execute Protocol~\ref{alg:update_step}: $\mc C$ obtains $[[\mu_{k+1}]]$ 
 \Comment Secure update protocol that ensures $\mu_{k+1} = \max\{\bar \mu_k, 0\}$
\end{algorithmic}
\end{protocol}

Line 3 ensures that the updated iterate has the required number of bits for the comparison protocol. This step is achieved by an exchange between the cloud and target node: the cloud additively blinds the iterate by a random number, sends it to the target node, which decrypts and truncates the sum and sends it back, where the cloud then subtracts the truncated random number.

The proof of security in the \mbox{semi-honest} model follows similar steps as in the argmax protocol in~\cite{Bost15} and we will address it in Appendix~\ref{subsec:proof_thm1}. 

\begin{proposition}\label{prop:iteration}
	Protocol~\ref{alg:iteration} is secure in the \mbox{semi-honest} model, according to Definition~\ref{def:view}.
\end{proposition}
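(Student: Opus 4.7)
The plan is to construct probabilistic polynomial-time simulators $S\cloud$ and $S\target$ for the views of $\mc C$ and $\mc T$ under Protocol~\ref{alg:iteration}, and to argue computational indistinguishability from the real views by a hybrid argument that combines (i) the semantic security of the Paillier cryptosystem, (ii) the statistical security of the additive blinding described in Section~\ref{subsec:symmetric}, and (iii) the security of the already-proved sub-protocols, invoked through sequential composition in the semi-honest model (Goldreich, Ch.~7).

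First, I would catalog each party's view. For $\mc C$, the view consists of its inputs $(A\cloud, Q\cloud, [[b\agent]], [[c\agent]], \eta, [[\mu_k]])$, its internal coin tosses (the permutation $\pi$ of Protocol~\ref{alg:rand_step}, the blindings $r,s$ of Protocol~\ref{alg:update_step}, the mask $\rho$ and the DGK coins of Protocol~\ref{alg:comp_DGKV}, and the truncation blinding), and all messages it receives from $\mc T$, which are ciphertexts under $pk\target$ together with the sub-protocol messages. For $\mc T$, the view consists of $sk\target, sk_{DGK}$, its own coins, and the decrypted quantities received during truncation and during Protocols~\ref{alg:comp_DGKV}--\ref{alg:update_step}, namely $z = b-a+2^l+\rho$, $\bar a = a+r$, $\bar b = b+s$ (after decryption), together with the DGK bits and the output bit $t$.

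Next, I would build $S\cloud$. Since $\mc C$ never holds $sk\target$, every ciphertext that reaches it from $\mc T$ — the replies to the truncation step, $[[z\div 2^l]]$ and $[[\delta\target]]$ inside the DGK wrapper, and $[[v]], [[t]]$ in Protocol~\ref{alg:update_step} — can be replaced by fresh encryptions of arbitrary plaintexts (e.g., zero), and by the semantic security of Paillier the resulting transcript is computationally indistinguishable from the real one. The output $[[\mu_{k+1}]]$ is then a deterministic function of the cloud's inputs and its sampled randomness, so consistency with $f\cloud$ is automatic. Building $S\target$ is more delicate but follows the same template: the plaintexts it recovers via decryption, i.e.\ $z, \bar a, \bar b$ and the truncation value, are each one-time-pad blinded by $\rho, r, s$ drawn uniformly from a domain of $l+\lambda$ bits, hence statistically indistinguishable from uniform; the simulator samples uniform surrogates, encrypts them freshly, and splices in the simulator provided by Proposition~\ref{prop:comp_DGK} for the DGK inner protocol. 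The final output bit $t$ that $\mc T$ decrypts is, conditional on the permutation $\pi$ of Protocol~\ref{alg:rand_step}, uniformly distributed over $\{0,1\}$ independently of the sign of $\bar\mu_k$, so it can be simulated by an unbiased coin toss.

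The main obstacle I expect is the joint-distribution argument for $\mc T$: the simulator must emit $(\bar a, \bar b, t)$ whose joint distribution matches the real execution, not just the marginals. This is where Protocol~\ref{alg:rand_step} does the essential work — because $(\bar a, \bar b)$ are blinded with \emph{independent} randomness $r,s$ and the pair $(a,b)$ has been uniformly permuted between $(0,\bar\mu_k)$ and $(\bar\mu_k,0)$, the triple $(\bar a, \bar b, t)$ is distributed as two independent uniform $(l+\lambda)$-bit strings together with an independent fair bit, regardless of the sign of $\bar\mu_k$. Showing this formally, together with the re-randomization argument for the ciphertext refresh in Protocol~\ref{alg:update_step} (which prevents $\mc C$ from recognizing $[[v]]$ as a copy of $[[\bar a]]$ or $[[\bar b]]$), is the crux of the proof; once it is in place, sequential composition plus the hybrid over the Paillier and DGK distinguishing experiments delivers the conclusion, to be carried out in detail in Appendix~\ref{subsec:proof_thm1} as announced.
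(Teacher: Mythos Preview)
Your proposal is correct and follows essentially the same approach as the paper: the paper does not give a standalone proof of Proposition~\ref{prop:iteration} but embeds it in the proof of Theorem~\ref{thm:main_alg} in Appendix~\ref{subsec:proof_thm1}, where the per-iteration simulators $S\cloud^k$ and $S\target^k$ are constructed exactly along the lines you describe (fresh Paillier/DGK ciphertexts for $\mc C$ via semantic security, uniform $(l+\lambda)$-bit surrogates for the blinded plaintexts and a fair coin for $t_k$ for $\mc T$ via statistical blinding and the random permutation of Protocol~\ref{alg:rand_step}). One small correction: $[[\mu_{k+1}]]$ is not a deterministic function of $\mc C$'s inputs and coins alone, since it depends on $[[v]]$ and $[[t]]$ received from $\mc T$; the paper's simulator simply computes it from the faked messages and relies on semantic security for indistinguishability, which is the argument you should use there as well.
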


Using the building blocks described above, we can finally assemble the protocol that privately solves the constrained quadratic optimization problem \eqref{eq:problem} with private data and sends the optimal solution to the target node. The public key $pk\target$ and bit-length $l$ are known by all the parties, hence we omit them from the inputs.

\begin{protocol}[Privacy preserving algorithm for solving strictly-convex quadratic optimization problems]
  \label{alg:main_alg}
  \begin{algorithmic}[1]
\small
 \Require{$\mc A_{i=1,\ldots,p}$: $b\agent= \{b_j\}_{j=1,\ldots,m}, c\agent= \{c_j\}_{j=1,\ldots,n}$; $\mc C$: $A\cloud\in \mathbb R^{m\times n}, Q\cloud\in \mathbb S^n_{++},K$; $\mc T$: $sk\target,K $}
 \Ensure{$\mc T$: $x^\ast$}
 \For{i=1,\ldots,p}
	\State $\mc A_i:$ encrypt the private information $msg_i \gets$ $ ([[b_i]]$, $[[c_i]])$
	 \State $\mc A_i:$ send the encrypted messages to $\mc C$
 \EndFor
 \State $\mc C$: Construct the vectors $[[b\agent]]$ and $[[c\agent]]$ from the messages
 \State $\mc C$: $\eta \gets 1/\lambda_{max}(A\cloud Q\cloud^{-1}A\cloud^\intercal)$
 \State $\mc C$: Choose a random positive initial value $\mu_0$ for the dual variable and encrypt it: $[[\mu_0]]$
\For {each $k=0,\ldots,K-1$}
	\State $\mc C,\mc T$ execute Protocol~\ref{alg:iteration}: $\mc C$ gets $[[\mu_{k+1}]]$ 
\Comment{$\mc C,\mc T$ securely effectuate an iteration of the dual projected gradient ascent}
\EndFor
	\State $\mc C$: $[[x^\ast]] \gets (-Q\cloud^{-1} A\cloud ^\intercal)\otimes[[\mu_K]] \oplus (-Q\cloud^{-1})\otimes[[c\agent]]$ and send it to $\mc T$ \Comment Compute the primal optimum from the optimal dual solution as in~\eqref{eq:KKT1}
	\State $\mc T$: Decrypt $[[x^\ast]]$ and output $x^\ast$
\end{algorithmic}
\end{protocol}

\subsection{Fixed-point arithmetic}~\label{subsec:quantization}
The optimization problem~\eqref{eq:problem} is defined on real variables, whereas the Paillier encryption scheme is defined on integers. To address this issue, we adopt a \mbox{fixed-point} arithmetic setting, where we allow for a number to have a fixed number of fractional bits. First, we consider numbers that have the magnitude between $-2^{l_i-1}< x < 2^{l_i-1}$. Second, we consider a value having $l_i$ bits for the integer part and $l_f$ bits for the fractional part. Therefore, by multiplying the real values by $2^{l_f}$ and truncating the result, we obtain integers. We choose $l=l_i+l_f$ large enough such that the loss in accuracy is negligible and assume that there is no overflow. For ease of exposition, we consider this data processing done implicitly in the protocols described. 

The random numbers used for blinding the sensitive values (in Protocols~\ref{alg:comp_DGKV} and~\ref{alg:update_step}) are sampled uniformly from the integers in $\mathbb Z_N$ of $l+\lambda$ bits, where $\lambda$ is the statistical security parameter, as already explained in Section~\ref{sec:problem_setup}, chosen such that brute-forcing the solution is intractable. In order to guarantee correctness of the comparison protocol, no overflow must take place, so we must impose $log_2 N > l+ \lambda + 1$.

The errors in the solution caused by the fixed-point arithmetic operations necessary for the encryption can be analyzed with the same tools as in~\cite{Jerez14,Rubagotti2016real,Alexandru18}. The round-off errors can be regarded as states in a stable dynamical system with bounded disturbances, and hence, have a bounded norm that offers a guide on how to choose the number of fractional bits $l_f$ for the fixed-point representation. On the other hand, the overflow and quantization errors depend on the magnitude of the dual iterates. We considered feasible and bounded problems -- the dual problem~\eqref{eq:dual} has a finite solution -- therefore, one can select the number of integer bits $l_i$ in the representation such that no overflow occurs.


\section{Privacy of Quadratic Optimization Protocol}\label{sec:security_proof}

We will now introduce the main theoretical results of the paper. In the interest of completeness of the paper, more theoretical preliminaries needed for the privacy results, such as semantic security, are given in Appendix~\ref{app:cryptographic_prel}. We provide the statements of the results and the main ideas of the proofs here, and include the detailed arguments in Appendix~\ref{app:proof}. 

\begin{theorem}\label{thm:main_alg}
	Protocol~\ref{alg:main_alg} achieves privacy with respect to Definition~\ref{def:view} for non-colluding parties.
\end{theorem}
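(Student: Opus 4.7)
The plan is to apply the simulation paradigm of Definition~\ref{def:view}: since the statement only addresses non-colluding parties, I treat each party individually and construct, for each of the agents $\mathcal A_i$, the cloud $\mathcal C$, and the target $\mathcal T$, a probabilistic polynomial-time simulator that, given only that party's input and output, produces a view computationally indistinguishable from its real-execution view. Indistinguishability will reduce, depending on the message, either to semantic security of the Paillier and DGK cryptosystems (for ciphertexts the party cannot decrypt) or to statistical closeness of the one-time-pad--style blinding of Section~\ref{subsec:symmetric} (for plaintexts that the party does decrypt but that have been masked by a uniform $(l+\lambda)$-bit random number). The simulators for the embedded sub-protocols are already available from Propositions~\ref{prop:comp_DGK} and~\ref{prop:iteration}, so the main task is to assemble them via a hybrid argument across iterations.

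For the agents the simulator is trivial: their view consists of the private input $(b_i,c_i)$, the public key $pk_{\mathcal T}$, and fresh randomness used to form $[[b_i]]$ and $[[c_i]]$, all of which the simulator produces directly from $(b_i,c_i)$. For the cloud, the view contains only ciphertexts---from the agents at the outset, and from $\mathcal T$ inside Protocols~\ref{alg:comp_DGKV} and~\ref{alg:update_step}---together with its own coins and the DGK sub-transcript; since $\mathcal C$ holds no decryption key, semantic security lets the simulator replace every Paillier and every DGK ciphertext with an encryption of $0$, while it invokes the sub-protocol simulator of Proposition~\ref{prop:comp_DGK} in a black-box manner. A hybrid argument across the $K$ iterations, with distinguishing loss bounded by $K$ times the per-iteration advantage, completes the cloud's case.

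The target's case is the delicate one because $\mathcal T$ actually decrypts. Its view is $(sk_{\mathcal T}, sk_{DGK}, x^\ast)$ augmented, for each iteration, by the plaintext $z = b - a + 2^l + \rho$ of Protocol~\ref{alg:comp_DGKV}, the blinded pair $(\bar a, \bar b) = (a+r, b+s)$ of Protocol~\ref{alg:update_step}, the DGK sub-transcript, and the comparison bit $t$. The masks $\rho, r, s$ are uniform of $l+\lambda$ bits, so by statistical security of additive blinding each decrypted value is within $2^{-\lambda}$ of uniform and the simulator can simply sample it uniformly. The bit $t$ is simulated by a fair coin, since the random permutation in Protocol~\ref{alg:rand_step} renders the order of the two operands uniform from $\mathcal T$'s viewpoint and so $t$ carries no information about the sign of $\bar\mu_k$. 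The simulator concludes by encrypting the given $x^\ast$ to mimic the final ciphertext $[[x^\ast]]$.

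The main obstacle is making the composition across the $K$ gradient iterations air-tight: a naive per-iteration argument risks leaking cross-iteration correlations through the state $[[\mu_k]]$. I would address this by a standard hybrid in which one iteration at a time is swapped for its simulated counterpart, crucially exploiting that the cloud's only state between iterations is the encrypted and (via Protocol~\ref{alg:update_step}) re-randomized iterate $[[\mu_k]]$, so any inter-iteration dependence is hidden by semantic security; the target's only persistent state is $sk_{\mathcal T}$, which the simulator already possesses by assumption on its input. The overall distinguishing advantage then accumulates linearly in $K$ and remains negligible in the security parameters $\sigma$ and $\lambda$.
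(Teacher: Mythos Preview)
Your proposal is correct and follows essentially the same architecture as the paper's proof: per-party simulators built from semantic security of Paillier/DGK on the cloud side and from the statistical hiding of the additive one-time pad (plus the random permutation rendering $t$ uniform) on the target side, stitched across the $K$ iterations by a hybrid argument. The paper's simulators are constructed in the same spirit, only with cosmetic differences (it encrypts fresh random values rather than zeros for the cloud, and it writes out the hybrid chain $H_{-1},\ldots,H_{K+1}$ explicitly). One place where the paper is more explicit than your sketch is the target's cross-iteration analysis: rather than merely asserting that the hybrid absorbs correlations, it writes the recursion linking $v_{k+1}$ to $v_k$ and shows that the fresh masks $r_{k+1},s_{k+1}$ and the unknown permutation $\pi_{k+1}$ sever any usable dependence, and it separately checks that knowledge of $x^\ast$ together with $v_K$ still leaves $\mu_K$ and $c_{\mathcal A}$ hidden; your remark that ``the target's only persistent state is $sk_{\mathcal T}$'' understates this, since the target retains all decrypted blinded values, and it is precisely the freshness of the masks at each round that makes the hybrid go through.
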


The intuition for the proof is as follows. Consider an iteration of the gradient ascent in Protocol~\ref{alg:main_alg}. 
Firstly, in the Paillier cryptosystem, two ciphertexts are computationally indistinguishable to a party that does not have access to the decryption key. 
Secondly, the exchanges between the cloud and the target are additively blinded using a different random number uniformly sampled from a large enough range (at least $\lambda$ bits more over the values that are desired to be blinded, where the size of $\lambda$ is chosen appropriately, as discussed in Section~\ref{subsec:symmetric}. This means that the blinded message is statistically indistinguishable from a random number sampled from the same distribution. Thirdly, the ciphertexts are refreshed (a different encryption of the same value), after each exchange, so a party that does not have access to the decryption key cannot infer information about the encrypted values by simply comparing the ciphertexts. Then, none of the parties can infer the magnitude or the sign of the private variables. However, we need to show that privacy is not broken by running an iteration multiple times. We prove that storing the exchanged messages does not give any new information on the private data, using similar arguments. Formally, using Definition~\ref{def:view}, we construct a probabilistic polynomial-time simulator that randomly generates messages from the inputs and outputs such that its view and the view of the adversary, on the same inputs, are computationally indistinguishable. 
The correctness of Protocol~\ref{alg:main_alg} is immediate and follows from the correctness of the dual gradient ascent algorithm and the correctness of the comparison protocol. The detailed proof is given in Appendix~\ref{subsec:proof_thm1}.

Let us now consider collusions between the parties in the setup and prove privacy of Protocol~\ref{alg:main_alg} under coalitions. The definition of privacy is naturally extended in this case and we can further establish the computational security in the following result -- see Appendix~\ref{app:cryptographic_prel} for more details. Definition~\ref{def:coalition_view} states that even under collusions, the protocol securely computes a functionality, which in this case, is solving a quadratic optimization problem. No further information is revealed than what can inferred from the coalition's inputs and outputs. However, if all agents and the cloud collude then they have access to all the information to solve the problem, in which case the above result is rather vacuous. Similarly, if the cloud colludes with the target node, then it can gain access to all the private data of the agents. Hence, we consider coalitions that involve either all the agents or the cloud and the target node to be prohibited (and unrealistic) and only consider coalitions between a strict subset of the agents and the cloud or between a strict subset of the agents and the target node. We give a complementary analysis on the information gained from the inputs and outputs of coalitions in Section~\ref{sec:discussion}.

\begin{theorem}\label{thm:main_alg_coal}
	Protocol~\ref{alg:main_alg} achieves privacy with respect to Definition~\ref{def:coalition_view} against coalitions.
\end{theorem}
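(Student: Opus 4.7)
The plan is to extend the single-party simulator construction from Theorem~\ref{thm:main_alg} to handle the two admissible coalition types, namely (i)~a strict subset of the agents together with the cloud~$\mc C$, and (ii)~a strict subset of the agents together with the target node~$\mc T$. For any such coalition $I$, I will exhibit a probabilistic polynomial-time simulator $S$ that, on input the coalition's private data $\bar x_t$ and the corresponding outputs $f_I(\bar x_t)$, produces a transcript computationally indistinguishable from the joint view $V_I^\Pi$. Since the protocol is deterministic up to the random coins of the encryption schemes and the masking steps, I plan to reuse the hybrid argument from Theorem~\ref{thm:main_alg} and show that each additional coalition member contributes only information that is already derivable from the coalition's inputs and outputs.

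First I would treat case~(i): $I=\{\mc A_{i_1},\ldots,\mc A_{i_s},\mc C\}$ with $s<p$. Here the simulator $S$ knows $(b_{i_j},c_{i_j})$ for $j=1,\ldots,s$ and also $Q\cloud, A\cloud, \eta$. The messages the colluding agents produce are Paillier encryptions of their own inputs under $pk\target$, which $S$ can reproduce exactly. For the remaining honest agents' contributions, $S$ samples arbitrary plaintexts in the admissible integer range and encrypts them under $pk\target$; because $\mc T\notin I$ and hence $sk\target$ is unavailable to the coalition, semantic security of Paillier (Appendix~\ref{app:cryptographic_prel}) makes these simulated ciphertexts computationally indistinguishable from the real ones. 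The interactive exchanges with $\mc T$ inside Protocols~\ref{alg:rand_step}--\ref{alg:update_step} are simulated exactly as the cloud's view is simulated in the proof of Theorem~\ref{thm:main_alg}: the truncation and update blinding masks are drawn from $\mathbb Z_N$ of $l+\lambda$ bits, the ciphertexts returned by $\mc T$ are replaced by fresh encryptions of arbitrary plaintexts, and the refreshed ciphertext in Protocol~\ref{alg:update_step} is indistinguishable from a random encryption. The composition argument already developed for the non-colluding case then carries over with only cosmetic changes.

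Next I would treat case~(ii): $I=\{\mc A_{i_1},\ldots,\mc A_{i_s},\mc T\}$ with $s<p$. Now $S$ possesses $sk\target$, the colluding agents' inputs, and the final output $x^\ast$. The delicate point is that $\mc T$ observes plaintext-level quantities at several places: the decrypted value $z=b-a+2^l+\rho$ in line~3 of Protocol~\ref{alg:comp_DGKV}, the truncation exchange in line~3 of Protocol~\ref{alg:iteration}, and the additively blinded pair $\bar a,\bar b$ in Protocol~\ref{alg:update_step}. Every such decryption is masked by a fresh uniform random number of at least $l+\lambda$ bits supplied by the (non-colluding) $\mc C$, so by the one-time-pad argument of Section~\ref{subsec:symmetric} the statistical distance to the uniform distribution is at most $2^{-\lambda}$; $S$ will therefore generate each of these fields as an independent uniform sample. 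The comparison bits $t_k$ observed in the clear by $\mc T$ are, by the randomization step of Protocol~\ref{alg:rand_step} executed privately by $\mc C$, unbiased Bernoulli draws conditionally on the coalition's view, and $S$ simulates them by independent coin flips. The encrypted $[[x^\ast]]$ that $\mc T$ decrypts in the last line of Protocol~\ref{alg:main_alg} is planted directly from the known output.

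The main obstacle I anticipate is the case~(ii) analysis, because the target observes $K$ rounds of intermediate plaintexts and the distinguishing advantage must be shown to remain negligible across all iterations. The argument will proceed by a hybrid over iterations: in the $k$-th hybrid I replace the real masked values of round $k$ by independent uniform samples and bound the distinguishing advantage by $2^{-\lambda}$ per replaced field, giving a total advantage of order $Km\cdot 2^{-\lambda}$, which remains negligible for the chosen security parameters. Throughout, I must verify that the coalition cannot combine the known $(b_{i_j},c_{i_j})$ with $x^\ast$ and the simulated transcript to reconstruct $A\cloud$ or $Q\cloud$: since $\mc C\notin I$ these matrices never leave the cloud in unmasked form, so this amounts to the same semantic-security and one-time-pad arguments. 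Correctness under the coalition is inherited directly from the correctness proof of Theorem~\ref{thm:main_alg}, because the semi-honest assumption forces every coalition member to follow the protocol exactly.
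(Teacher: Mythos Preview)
Your proposal is correct and follows essentially the same approach as the paper: reduce to the single-party simulators of Theorem~\ref{thm:main_alg}, augment them with the colluding agents' plaintext inputs, and observe that agents contribute only initial ciphertexts (no additional protocol messages), so the semantic-security and one-time-pad arguments from the non-colluding case carry over directly.

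The paper adds one observation you omit in case~(ii): when $A\cloud,Q\cloud$ are public and some colluding agent's constraint is strictly inactive at $x^\ast$, the coalition can deduce $\mu^\ast_i=0$ from the KKT conditions and thereby recover the specific blinding coins $r_{K-1},s_{K-1}$ used for that coordinate in the final update step. The paper simply notes these coins are independent of all other private data, so learning them does not affect simulability. Your blanket claim that the masked values are independent uniform samples is still correct at the distributional level, but you should be aware that the coalition can \emph{a posteriori} identify some of these samples as the cloud's actual coins---this is not a gap, just a subtlety worth acknowledging.
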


The proof of Theorem~\ref{thm:main_alg_coal} can be derived from the proof of Theorem~\ref{thm:main_alg} because the agents only contribute with their inputs to the view of the coalition, and not with new messages than. The proof is given in Appendix~\ref{subsec:proof_thm2}.
%


\section{Alternative quadratic optimization protocol without secure comparison}\label{sec:relaxation_security}
As explained in Section~\ref{subsec:comparison}, the major computation and communication overhead of the above protocol is the secure comparison protocol, required to project dual variables to non-negative numbers. In this section, we describe a computationally less involved alternative approach, which bypasses the need for the secure comparison protocol, at the expense of revealing more information. This approach is developed in more detail in previous work~\cite{Shoukry16}.

Specifically, consider a step of the dual gradient algorithm where the cloud maintains an unprojected gradient ascent step $\bar{\mu}_k = \mu_k - \eta [A\cloud Q\cloud^{-1} (A\cloud^\intercal\mu_k + c\agent) + b\agent]$ encrypted using the public key of the target node. Suppose the cloud multiplies the elements in this vector with random scalar values $r_k$ uniformly distributed over the positive integers and sends the products to the target node. The latter can decrypt the message using its private key and gain access not to the actual unprojected iterate but to the randomly scaled version of it, which reveals the sign. It can then project it to the non-negative orthant in an unencrypted fashion $\max\{0, (r_k)_i (\bar{\mu}_k)_i \}$ for $i=1,\ldots,m$, and finally encrypt the result using its own public key and return it to the cloud. The cloud can divide the result with the previously selected values $r_k$ to compute in an encrypted fashion the elements $[[(\mu_{k+1})_i]] = (1/(r_k)_i)\otimes [[\max\{0, (r_k)_i (\bar{\mu}_k)_i \}]]$ which is equivalent to the encrypted actual projected step $[[\max\{0, (\bar{\mu}_k)_i \}]]$ for $i=1,\ldots,m$, because division with a positive number commutes with the max operator. 

This protocol is presented next, and it can be employed as an alternative for the more demanding Protocol~\ref{alg:iteration}, as it bypasses the complexity of the secure comparison part. On the other hand, as can be seen from the outputs of the protocol, it reveals more information to the target node than the secure comparison approach. In particular it reveals a scaled version of the unprojected dual variable which in turn does not reveal the magnitude of this value but reveals whether this is positive, negative, or zero.

\begin{protocol}[Alternative iteration of the dual projected gradient ascent method]
  \label{alg:iteration_alternative}
  \begin{algorithmic}[1]
\small
 \Require{$\mc C$: $A\cloud\in \mathbb R^{m\times n}, Q\cloud\in \mathbb S^n_{++}, [[b\agent]], [[c\agent]],\eta > 0, [[\mu_k]]$; $\mc T$: $sk\target  $}
 \Ensure{$\mc C$: $[[\mu_{k+1}]]$; $\mc T$: $(r_k)_i (\bar\mu_k)_i$, $i=1,\ldots,m$}
	\State $\mc C$: $[[\nabla g(\mu_k)]] \gets (-A\cloud Q\cloud^{-1} A\cloud^\intercal)\otimes[[\mu_k]] \oplus (-A\cloud Q\cloud^{-1})\otimes[[c\agent]]\oplus (-1)\otimes [[b\agent]]$ 
	\Comment Compute the encrypted gradient as in~\eqref{eq:grad}
	\State $\mc C$: $[[\bar \mu_k]] \gets [[\mu_k]] \oplus \eta\otimes [[\nabla g(\mu_k)]]$ \Comment Update the value in the ascent direction
	\State $\mc C$: generate a random uniform positive scalar $r_k$ and send $(r_k)_i [[(\bar \mu_k)_i]]$ to $\mc T$ 
	\State $\mc T$: decrypt and truncate $(r_k)_i \otimes [[(\bar \mu_k)_i]]$, $i=1,\ldots,m$
	\State $\mc T$: compute $ \max\{ (r_k)_i (\bar \mu_k)_i, 0\}$
	\State $\mc T$: encrypt $ [[\max\{ (r_k)_i (\bar \mu_k)_i, 0\}]]$ and send to $\mc T$ 
	\State $\mc C$: compute $[[\mu_{k+1}]] = (1/(r_k)_i) \otimes [[\max\{ (r_k)_i (\bar \mu_k)_i, 0\}]]$, $i=1,\ldots,m$  
\end{algorithmic}
\end{protocol}

Recall that we can only perform operations on integers when using the Paillier cryptosystem. To this end, we performed an implicit multiplication by $2^{l_f}$ and truncation when encrypting numbers and a division by the corresponding factor when decrypting. For Algorithm~\ref{alg:iteration_alternative} to work, we slightly modify this technique. The reasoning is the following: we want the values of $r_k$ to be large enough to blind the magnitude of the elements of $\bar \mu_k$, so we will randomly sample $r_k$ from the values of $\gamma+l$ bits, where $\gamma$ is the security size for the multiplicative blinding. Furthermore, we need to be able to represent $1/(r_k)_i$ on the message space and to have enough precision as to obtain the correct value of $(\mu_{k+1})_i$ in line~7. Hence, we will perform implicit multiplications and divisions by $2^{l_f'}$, for $l_f'>l + \gamma$. 
This change shows that, although significantly less communication is required to perform the projection operation, all the other operations are more expensive, because we work with larger numbers. 

\begin{theorem}\label{thm:alternative}
Protocol~\ref{alg:main_alg} with the iterations as in Protocol~\ref{alg:iteration_alternative} achieves privacy according to Definitions~\ref{def:view} and~\ref{def:coalition_view}.
\end{theorem}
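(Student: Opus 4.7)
The plan is to follow the same template as the proofs of Theorems~\ref{thm:main_alg} and~\ref{thm:main_alg_coal}, constructing probabilistic polynomial-time simulators $S_{\mc C}$ and $S_{\mc T}$ whose outputs are computationally indistinguishable from the real views $V_{\mc C}^\Pi$ and $V_{\mc T}^\Pi$, and then extending to coalitions. The key structural observation is that, compared to Protocol~\ref{alg:iteration}, the functional specification of Protocol~\ref{alg:iteration_alternative} now explicitly lists the scaled iterates $(r_k)_i(\bar\mu_k)_i$ for $i=1,\ldots,m$ and $k=0,\ldots,K-1$ as outputs delivered to $\mc T$. This weakening of the ideal functionality is precisely what allows the simulation to go through without a secure comparison subroutine: whatever information the multiplicative blinding fails to hide is bundled into $\mc T$'s output and thus handed to the simulator as input.

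For the cloud's view, I would argue as in Appendix~\ref{subsec:proof_thm1}: the messages $\mc C$ receives are (i) the ciphertexts $[[b\agent]], [[c\agent]]$ from the agents, and (ii) the ciphertexts $[[\max\{(r_k)_i(\bar\mu_k)_i,0\}]]$ returned by $\mc T$ after re-encryption. Since $\mc C$ never holds $sk\target$, the semantic security of the Paillier scheme implies that each of these ciphertexts is computationally indistinguishable from a Paillier encryption of an arbitrary plaintext of the same bit-length. Hence $S_{\mc C}$ simulates by producing fresh encryptions of zeros (or any fixed values) together with uniformly sampled $r_k$ from the specified $\gamma+l$-bit range, exactly as $\mc C$ would draw them; a standard hybrid argument over the $O(mK)$ ciphertexts yields computational indistinguishability. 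For the target's view, $\mc T$ observes (i) ciphertexts $(r_k)_i\otimes[[(\bar\mu_k)_i]]$ sent by $\mc C$, which $\mc T$ decrypts to $(r_k)_i(\bar\mu_k)_i$, and (ii) the final $[[x^\ast]]$. Because both decrypted quantities are specified outputs of the functionality, $S_{\mc T}$ simulates by taking these output values directly and producing their Paillier encryptions under $pk\target$; the resulting ciphertexts are identically distributed (not merely computationally close) to those generated honestly by $\mc C$'s scalar-ciphertext multiplication, provided one includes the standard freshening by multiplication with an encryption of zero, which is benign under semantic security.

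For the coalition case (Definition~\ref{def:coalition_view}), I would invoke the same restriction as in Theorem~\ref{thm:main_alg_coal}: coalitions containing either all agents, or both $\mc C$ and $\mc T$, are excluded as trivially breaking privacy. For a coalition of $\mc C$ with a strict subset of agents $I_{\mc A}$, the simulator is built from $S_{\mc C}$ augmented with the inputs $\{b_i,c_i\}_{i\in I_{\mc A}}$, while the non-colluding agents' contributions remain hidden under semantic security. For a coalition of $\mc T$ with a strict subset of agents, the argument for $S_{\mc T}$ carries over unchanged since the agents contribute only their inputs, never live messages, to the coalition's view. The main obstacle, and the place where I expect the subtlety to lie, is to argue carefully that the multiplicative blinding $(r_k)_i(\bar\mu_k)_i$ does not leak across iterations beyond what is already in the outputs: because each $r_k$ is sampled independently, the joint distribution of decrypted values seen by $\mc T$ factors into the product of the per-iteration outputs, so no cross-iteration distinguisher can do better than one that is handed the outputs directly. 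Correctness, needed to close Definition~\ref{def:view}, follows from $\max\{0,(r_k)_i(\bar\mu_k)_i\}=(r_k)_i\max\{0,(\bar\mu_k)_i\}$ for $(r_k)_i>0$, combined with the fixed-point scaling by $2^{l_f'}$ introduced after Protocol~\ref{alg:iteration_alternative} to ensure that multiplication by the modular inverse of $(r_k)_i$ recovers $(\mu_{k+1})_i$ with negligible quantization error.
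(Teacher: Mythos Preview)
Your proposal is correct and follows essentially the same approach as the paper's (sketch) proof: the cloud simulator relies on the semantic security of Paillier to replace incoming ciphertexts by fresh encryptions of arbitrary values, while the target simulator exploits the crucial observation that the scaled iterates $(r_k)_i(\bar\mu_k)_i$ are now part of $\mc T$'s prescribed output and can therefore be fed directly back as simulated messages; coalitions are handled by augmenting these simulators with the colluding agents' inputs. Your write-up is in fact more detailed than the paper's sketch, and your remark about cross-iteration leakage is moot for exactly the reason you identify---every decrypted value is already an output---so no additional argument is needed there.
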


The proof follows similar steps as in the proof of the main protocol in Section~\ref{sec:security_proof} and is sketched in Appendix~\ref{subsec:proof_thm3}.
%


\section{Privacy discussion}\label{sec:discussion}

In this section we discuss privacy aspects that differ from the computational notions of Definitions~\ref{def:view} and~\ref{def:coalition_view}. Even if a protocol does not leak any information, the known information in a coalition (e.g., the output and some of the inputs) can be used by the coalition to infer the rest of the private inputs. More specifically, in our optimization problem, the private variables and the optimal solution are coupled via the optimality conditions \eqref{eq:KKT1}-\eqref{eq:KKT4}, which are public knowledge, irrespective of the protocol, and may be used for the purpose described above. 

Consider the following definition that concerns the retrieval of private data from adversarial/known data.

\begin{definition}\label{def:non-unique}(Non-unique retrieval)
Let $p$ be the private inputs of a problem and let an algorithm $A(p)$ solve that problem. Let $\mc K$ be the adversarial knowledge of the problem, which can contain public information, some private information (including some parts of the input) and the output of algorithm~$A$ for the adversary, denoted by~$A^{\mc K}(p)$. We say $p$ cannot be uniquely retrieved by the adversary if there exists a set $\mc U$, such that $p\in\mc U$, $|\mc U|\geq 2$ and:
\[\forall p'\in\mc U: A^{\mc K}(p) = A^{\mc K}(p').\]
\end{definition}
Definition~\ref{def:non-unique} can be modified to be stronger by requiring the set $\mc U$ to have an infinite number of elements.

In what follows, beyond the computational security analysis of the previous sections, we carry out an algebraic analysis on a \mbox{black-box} protocol that given the agent's private data $b\agent,c\agent$ and the cloud's matrices $Q\cloud, A\cloud$, outputs the solution $x^\ast$ of Problem~\eqref{eq:problem} to the target node. We provide conditions such that a coalition cannot uniquely determine unknown private inputs from the output and a set of inputs, in the sense of Definition~\ref{def:non-unique}. In particular, this analysis applies to Protocol~\ref{alg:main_alg} which, assuming it runs for sufficient iterations, outputs the desired result $x^\ast$ to the target node. We perform this algebraic analysis in the space of real numbers, which can be further expanded to fixed-point arithmetics for large enough precision. 

Suppose without loss of generality that a coalition between $\bar p$ agents ($1\leq \bar p < p$) has access to the elements $b_1, \ldots, b_{\bar{m}}$ with $0 \leq \bar{m} \leq m$, and $c_1, \ldots, c_{\bar{n}}$ with $0 \leq \bar{n} \leq n$. Then let us define the decomposition of the matrix $A\cloud$ as:
\begin{equation}
	A\cloud = \left[ \begin{array}{c}
	A_1 \\ \hline \begin{array}{c|c} A_{21} & A_{22} \end{array}
	\end{array}
	 \right]
\end{equation}
where the matrices $A_1 \in \mathbb{R}^{\bar{m} \times n}$, $A_{21} \in \mathbb{R}^{(m - \bar{m}) \times \bar{n}}$, \mbox{$A_{22} \in \mathbb{R}^{(m -\bar{m}) \times (n -\bar{n})}$}.

\begin{proposition}\label{prop:assum_1} Consider a protocol solving Problem~\eqref{eq:problem} and a coalition between the target node and agents with access to $\bar{m}$ of the values of $b\agent$ and $\bar{n}$ of the values of $c\agent$. Suppose the cost and constraint matrices $A\cloud$, $Q\cloud$ are public. Then: 
	\begin{enumerate}[leftmargin=10pt]
		\item[(1)] if $\bar{m}<m$ and there exists a vector $\delta \in \mathbb R^{m-\bar m}$ such that $ \delta\neq 0, \delta \succeq 0$ and $ A_{21}^\intercal \delta = 0$, then the coalition cannot uniquely retrieve the value of $b\agent$;
		\item[(2)] if additionally $\bar{n}<n$ and $A_{22}^\intercal \delta \neq 0$ then the coalition cannot uniquely retrieve the value of $c\agent$.
	\end{enumerate}
\end{proposition}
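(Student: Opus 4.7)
\begin{proof-sketch}
The plan is to construct, for each part, an alternative problem instance $(b', c')$ that agrees with $(b\agent, c\agent)$ on every coordinate known to the coalition and still admits $x^*$ as the optimal primal solution of~\eqref{eq:problem}. Under Definition~\ref{def:non-unique}, any such $(b', c')$ yields the same adversarial output as $(b\agent, c\agent)$, establishing non-unique retrieval.

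Starting from the KKT conditions~\eqref{eq:KKT1}--\eqref{eq:KKT4} at the original primal-dual pair $(x^*, \mu^*)$, I would introduce the perturbation $\delta^{\mathrm{ext}} := (0_{\bar m}, \delta)^\intercal \in \mathbb{R}^m$ and set
\[
\mu' := \mu^* + t\,\delta^{\mathrm{ext}},\qquad c' := -Q\cloud\,x^* - A\cloud^\intercal \mu',
\]
which enforces~\eqref{eq:KKT1} by definition. The hypothesis $A_{21}^\intercal \delta = 0$ yields $A\cloud^\intercal \delta^{\mathrm{ext}} = (0_{\bar n}, A_{22}^\intercal \delta)^\intercal$, so the difference $c' - c\agent = -t\,(0_{\bar n}, A_{22}^\intercal \delta)^\intercal$ is supported only on the last $n-\bar n$ coordinates, and the known block of $c\agent$ is preserved for every $t$. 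For $t \geq 0$, nonnegativity $\mu' \succeq 0$ follows directly from $\delta \succeq 0$. I would then define $b'$ componentwise: $b'_i = b_i$ for $i \leq \bar m$; $b'_i = a_i^\intercal x^*$ whenever $\mu'_i > 0$; and any value $b'_i \in [a_i^\intercal x^*, \infty)$ when $\mu'_i = 0$ and $i > \bar m$. By construction primal feasibility~\eqref{eq:KKT23} and complementary slackness~\eqref{eq:KKT4} hold, so $(x^*, \mu', b', c')$ is primal-dual optimal for the perturbed instance. Part~(2) then follows immediately: for any fixed $t \neq 0$ the vector $-t(0_{\bar n}, A_{22}^\intercal \delta)^\intercal$ is nonzero on the unknown coordinates of $c$ as soon as $A_{22}^\intercal \delta \neq 0$.

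The main obstacle is part~(1), namely guaranteeing $b' \neq b\agent$ on some unknown coordinate, because the construction above can collapse to $b' = b\agent$ when every unknown constraint is tight in the original instance. I plan to resolve this by splitting on whether $\mu^*$ has a zero entry at some $j > \bar m$. In the affirmative case, taking $t = 0$ leaves $\mu'_j = 0$ so $b'_j$ is free and I pick $b'_j \neq b_j$. Otherwise $\mu^*_j > 0$ for every $j > \bar m$, and then $t^\star := -\min_{k:\,\delta_k > 0} \mu^*_{k+\bar m}/\delta_k$ is strictly negative and admissible: a direct calculation using $\delta \succeq 0$ verifies $\mu^* + t^\star\delta^{\mathrm{ext}} \succeq 0$ with equality at the minimizing index $i^* > \bar m$. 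Hence $\mu'_{i^*} = 0$ frees $b'_{i^*}$, while the original complementary slackness forces $b_{i^*} = a_{i^*}^\intercal x^*$, so any strict choice $b'_{i^*} > a_{i^*}^\intercal x^*$ produces $b' \neq b\agent$. Throughout, $\delta \neq 0$ makes the minimum in $t^\star$ well-defined over a non-empty index set, $\delta \succeq 0$ keeps $\mu'$ in the nonnegative orthant, and $A_{21}^\intercal \delta = 0$ is precisely what preserves the known portion of $c\agent$.
\end{proof-sketch}
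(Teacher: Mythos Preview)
Your argument is correct and follows essentially the same route as the paper: perturb the dual variable along the extended direction $(0_{\bar m},\delta)$, use $A_{21}^\intercal\delta=0$ to keep the known block of $c\agent$ fixed, and then free up an unknown entry of $b\agent$ by driving the corresponding dual coordinate to zero. Your case split for part~(1) is on whether some unknown $\mu^*_j$ vanishes rather than on whether some unknown constraint is strictly inactive (as in the paper), but the two dichotomies lead to the same construction with $t^\star=-\min_{\delta_k>0}\mu^*_{k+\bar m}/\delta_k$ playing the role of the paper's $\epsilon$.
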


The proof is based on the fact that the variables $b\agent$, $c\agent$, $x^\ast$, $\mu^\ast$ satisfy the optimality conditions \eqref{eq:KKT1}-\eqref{eq:KKT4}. Specifically, these are conditions that the unknown variables $b_{\bar{m}+1}, \ldots, b_m$ and $c_{\bar{n}+1}, \ldots, c_n$, as well as the optimal dual variables $\mu^\ast$, must satisfy given all the rest of the known variables. Hence, if there are multiple such solutions to the KKT conditions the coalition cannot uniquely determine the private variables. The proof is given in Appendix~\ref{subsec:proof_assum_1}.

\begin{proposition}\label{prop:assum_2}
Consider a protocol solving Problem~\eqref{eq:problem} and a coalition between the cloud and agents with access to $\bar{m}$ of the values of $b\agent$ and $\bar{n}$ of the values of $c\agent$. Then, a coalition that satisfies $\bar m < m$ and $\bar n < n$ cannot uniquely retrieve the values of $b\agent,c\agent$ and $x^\ast$.
\end{proposition}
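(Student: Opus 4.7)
The plan is to verify non-unique retrieval (Definition~\ref{def:non-unique}) by constructing, for any tuple $(b\agent, c\agent, x^\ast)$ consistent with the coalition's view, a second distinct tuple producing the same view. The structural feature that distinguishes this setting from Proposition~\ref{prop:assum_1} is that the target node is \emph{not} in the coalition, so the primal optimum $x^\ast$ is never revealed to the cloud in the clear. Consequently the coalition's observable output $A^{\mc K}$ consists only of the known inputs $b_1,\ldots,b_{\bar m}, c_1,\ldots,c_{\bar n}$, the cloud-owned $Q\cloud, A\cloud$, and a transcript of Paillier ciphertexts and additively blinded intermediate values, which by Theorem~\ref{thm:main_alg_coal} is computationally indistinguishable from one depending only on this known data.

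I would construct the alternative tuple by perturbing \emph{only} the unknown block of $c\agent$. Since $\bar n < n$, pick a small nonzero $\Delta c \in \mathbb R^{n-\bar n}$ and define $c\agent' = c\agent + (0_{\bar n}, \Delta c)^\intercal$, keeping $b\agent' = b\agent$. The resulting program inherits its feasible set $\{x : A\cloud x \preceq b\agent\}$ and strictly convex Hessian $Q\cloud$ from the original, so a unique optimum $x^{\ast\prime}$ exists. To ensure $x^{\ast\prime} \neq x^\ast$, I would restrict $\|\Delta c\|$ to be small enough that the active set at $x^\ast$ is preserved; subtracting the stationarity condition~\eqref{eq:KKT1} and the active-constraint equalities between the perturbed and original problems then yields a linear system for $x^{\ast\prime}-x^\ast$ whose solution is nonzero outside a lower-dimensional subspace of perturbation directions, so a generic $\Delta c$ suffices.

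It then remains to check that the two tuples produce identical coalition views. The known and public components agree by construction. The protocol transcript consists of Paillier ciphertexts of quantities derived from $b\agent, c\agent$ together with the blinded values exchanged in Protocols~\ref{alg:rand_step}--\ref{alg:update_step}; the semantic security of Paillier together with the statistical indistinguishability of the additive blinding---precisely the ingredients used in the proof of Theorem~\ref{thm:main_alg_coal}---render these transcripts computationally indistinguishable across the two scenarios. Hence both $(b\agent, c\agent, x^\ast)$ and $(b\agent', c\agent', x^{\ast\prime})$ belong to the set $\mc U$ of Definition~\ref{def:non-unique}, giving $|\mc U|\geq 2$.

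The main obstacle is the genericity step $x^{\ast\prime}\neq x^\ast$: one must rule out pathological perturbations of the unknown $c$-coordinates that are exactly absorbed by a compensating shift of the dual optimum. The clean way is to invoke (generic) strict complementarity at $x^\ast$ and take $\|\Delta c\|$ small enough that the active set is locally constant, turning the KKT system into an explicit linear system in which positive-definiteness of $Q\cloud$ guarantees a nonzero displacement for almost every direction $\Delta c$.
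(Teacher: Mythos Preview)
Your opening observation—that the coalition excludes the target node and therefore never sees $x^\ast$ in the clear—is exactly the point, and the paper's proof stops essentially there. Since Section~\ref{sec:discussion} is explicitly a \emph{black-box} input--output analysis, the cloud-plus-agents coalition has no output at all from the protocol; $A^{\mc K}(p)$ in Definition~\ref{def:non-unique} is vacuous, and any choice of the missing coordinates $b_{\bar m+1},\ldots,b_m$, $c_{\bar n+1},\ldots,c_n$ (and the resulting $x^\ast$, which is finite because $Q\cloud\succ 0$) is equally consistent with the coalition's knowledge. That is the whole argument.

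Where you diverge is in folding the protocol transcript (Paillier ciphertexts, blinded exchanges) into the coalition's adversarial knowledge and then invoking semantic security and Theorem~\ref{thm:main_alg_coal} to argue the transcripts under the two tuples are computationally indistinguishable. This conflates two analyses the paper deliberately separates. In the black-box model of Section~\ref{sec:discussion} the transcript is not part of $A^{\mc K}(p)$, so there is nothing to compare. And if you do include it, Definition~\ref{def:non-unique} asks for the \emph{equality} $A^{\mc K}(p)=A^{\mc K}(p')$, not computational indistinguishability; the two runs produce literally different ciphertexts, so what you end up proving is precisely Theorem~\ref{thm:main_alg_coal} again, not the non-unique retrieval property.

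A side effect is that the sensitivity machinery you bring in—active-set stability, strict complementarity, genericity of $\Delta c$ to force $x^{\ast\prime}\neq x^\ast$—is unnecessary. Once the coalition's output is empty, arbitrary (not small, not generic) perturbations of the unknown coordinates of both $b\agent$ and $c\agent$ already populate $\mc U$; note also that by perturbing only $c\agent$ you do not exhibit non-uniqueness of $b\agent$, which the statement also claims.
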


The proof easily follows from the fact that $Q\cloud$ is a positive definite matrix, hence, the solution $x^\ast$ is finite, and $A\cloud$ does not have any columns or rows of zeros. A coalition between the cloud and the $\bar p<p$ agents cannot solve~\eqref{eq:problem} since it lacks all the data to define it ($\bar m < m$ and $\bar n < n$), so it cannot uniquely retrieve $x^\ast$ and the rest of the agents' private data.

Propositions~\ref{prop:assum_1} and~\ref{prop:assum_2} give general sufficient conditions for the desired non-unique retrieval property to hold and are independent of the exact problem instance values. If the conditions stated are not satisfied, then the previous result in Theorem~\ref{thm:main_alg_coal} still holds. However, the additional non-unique retrieval property may fail because the inputs and outputs of the coalition are sufficient to leak information about the rest of the private inputs. 
The above analysis can also be extended to the case where some of the matrices $Q_{\cloud}, A_{\cloud}$ are private information.

Let us now use Definition~\ref{def:non-unique} to analyze Protocol~\ref{alg:iteration_alternative} and the effect the release of more information to the target node has on the privacy of the inputs of the honest agents. We perform the analysis at a setup where the protocol runs for a sufficient number of iterations and hence the dual variable $\mu_k$ has converged to the true optimal value $\mu^\ast$ and the algorithm has also converged to the true optimal primal value $x^\ast$. 
Suppose the target node has access to the sign of the unprojected optimal dual variable $\mu^\ast + \nabla g(\mu^\ast)$ -- note that this is the case when we employ Protocol~\ref{alg:iteration_alternative}. In combination with the solution $x^\ast$, this information can be further employed by the target node to infer private information of the agents.

\begin{proposition}\label{prop:b_distinguishability}
Consider a protocol solving Problem~\eqref{eq:problem} where the target node has access to the solution $x^\ast$ and also the sign of the unprojected optimal dual variables $t := \text{sign}(\mu^\ast + \nabla g(\mu^\ast)) \in \{+1, -1, 0\}^m$. Suppose further the matrix $A\cloud$ is publicly available. Then the private values $b\agent$ cannot be uniquely retrieved by the target node if and only if $t_i < 0$ for some $i\in \{1,\ldots,m\}$.
\end{proposition}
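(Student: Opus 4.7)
The plan is to translate the additional target-side information $t$ into the active-set pattern of the primal constraints via complementary slackness, and thereby isolate exactly which entries of $b\agent$ remain ambiguous. The first step is an identity obtained from stationarity~\eqref{eq:KKT1}: solving for $x^\ast$ and substituting into~\eqref{eq:grad} gives $\nabla g(\mu^\ast) = A\cloud x^\ast - b\agent$, so component-wise $(\mu^\ast + \nabla g(\mu^\ast))_i = \mu_i^\ast + a_i^\intercal x^\ast - b_i$.

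Combining this identity with dual feasibility $\mu_i^\ast \geq 0$, primal feasibility $a_i^\intercal x^\ast - b_i \leq 0$, and complementary slackness $\mu_i^\ast(a_i^\intercal x^\ast - b_i)=0$, a short case analysis produces the dictionary: $t_i = +1$ iff $\mu_i^\ast > 0$ and $a_i^\intercal x^\ast = b_i$; $t_i = 0$ iff $\mu_i^\ast = 0$ and $a_i^\intercal x^\ast = b_i$; and $t_i = -1$ iff $\mu_i^\ast = 0$ and $a_i^\intercal x^\ast < b_i$. For the ``only if'' direction, I would assume $t_i \geq 0$ for every $i$; both non-negative cases give $b_i = a_i^\intercal x^\ast$, and since $x^\ast$ is in the target's view and $A\cloud$ is public, this reconstructs $b\agent$ exactly, so the set $\mc U$ in Definition~\ref{def:non-unique} collapses to $\{b\agent\}$.

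For the ``if'' direction, suppose $t_{i_0} = -1$ for some $i_0$. I would exhibit an explicit one-parameter family by defining $b\agent'(\epsilon)$ to agree with $b\agent$ except at coordinate $i_0$, where $b_{i_0}'(\epsilon) := b_{i_0} + \epsilon$, with $\epsilon \in [0, \epsilon_0]$ for some $\epsilon_0 > 0$ small. I would then verify that $(x^\ast, \mu^\ast)$ still satisfies every KKT condition of the perturbed problem: stationarity and dual feasibility do not involve $b\agent$; primal feasibility at row $i_0$ holds because $a_{i_0}^\intercal x^\ast < b_{i_0} \leq b_{i_0}'(\epsilon)$ and the other rows are untouched; and complementary slackness at $i_0$ is preserved since $\mu_{i_0}^\ast = 0$. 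Strict convexity of the primal ($Q\cloud \in \mathbb S^n_{++}$) then implies that $x^\ast$ remains the unique primal optimum, and the dictionary shows that every entry of $t$ is unchanged as long as $\epsilon_0$ is chosen small enough to preserve the strict inequality at $i_0$. This yields an infinite family $\mc U \supseteq \{b\agent'(\epsilon) : \epsilon \in [0,\epsilon_0]\}$ of right-hand sides consistent with the same target-side view.

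The delicate point I expect is verifying the invariance of the full target view in the ``if'' direction: that both $x^\ast$ and every entry of $t$ really remain unchanged under the perturbation. Both checks rest on the strictly inactive regime at $i_0$ being stable under small perturbations of the right-hand side, combined with uniqueness of $x^\ast$ from $Q\cloud \succ 0$; once these are in place, the claimed equivalence follows directly from the dictionary above.
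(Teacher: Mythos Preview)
Your proof is correct and follows the same active/inactive-constraint reasoning the paper uses; the paper itself gives only a brief informal paragraph after the proposition (observing that $t_i<0$ forces $\mu_i^\ast=0$ with the $i$th constraint strictly inactive, while $t_i\geq 0$ for all $i$ yields $b\agent=A\cloud x^\ast$), so your explicit dictionary and KKT verification are a more careful rendering of the same idea. One minor point: since you perturb $b_{i_0}$ upward by $\epsilon\geq 0$, the strict inactivity $a_{i_0}^\intercal x^\ast < b_{i_0}'(\epsilon)$ is automatically preserved for every $\epsilon\geq 0$, so the caveat about choosing $\epsilon_0$ small is unnecessary.
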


Here $t_i < 0$ implies that the corresponding optimal dual value is zero, $\mu^\ast_i = 0$. This means that the corresponding constraint in problem \eqref{eq:problem} is inactive at the optimal solution $x^\ast$~\cite[Ch. 5]{BV04convex} and the target node cannot uniquely determine the $i$th element of the corresponding bound $b\agent$. In the opposite case, if all constraints \eqref{eq:KKT23} are either active or redundant at the optimal solution ($\mu^\ast\succeq 0$), this is revealed to the target node because in that case $t_i \geq0$, and the private value $b\agent$ is uniquely determined by $b\agent = A\cloud x^\ast$.

\begin{proposition}\label{prop:c_distinguishability}
Consider the setup of Proposition~\ref{prop:b_distinguishability} and the matrix $Q\cloud$ is publicly available. The private values $c\agent$ cannot be uniquely retrieved from the outputs $x^\ast$ and $t$ of the target node if and only if $t_i > 0$ for some $i\in \{1,\ldots,m\}$.
\end{proposition}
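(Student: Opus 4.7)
The plan is to leverage the stationarity KKT condition \eqref{eq:KKT1} to reduce the question of recovering $c\agent$ to one about the optimal dual multipliers $\mu^\ast$. Since $Q\cloud$ and $A\cloud$ are public and $x^\ast$ is revealed, we have $c\agent = -Q\cloud x^\ast - A\cloud^\intercal \mu^\ast$, so $c\agent$ is uniquely determined exactly when $\mu^\ast$ is. The first step is to interpret what $t$ reveals about $\mu^\ast$: at a fixed point of the projected gradient ascent one has $\mu^\ast_i = \max\{0, \mu^\ast_i + \eta \nabla g(\mu^\ast)_i\}$ componentwise, and since $\eta > 0$ this, combined with complementary slackness, yields the equivalences $t_i \leq 0 \Leftrightarrow \mu^\ast_i = 0$ and $t_i > 0 \Leftrightarrow \mu^\ast_i > 0$, with the magnitude in the latter case \emph{not} revealed by the sign alone.

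For the ``only if'' direction, suppose $t_i \leq 0$ for every $i$. The equivalence above immediately gives $\mu^\ast = 0$, so $c\agent = -Q\cloud x^\ast$ is uniquely retrievable from the publicly known quantities and the output.

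For the ``if'' direction I would exhibit an alternative problem instance $(Q\cloud, A\cloud, b', c')$ that reproduces the same outputs $(x^\ast, t)$ but has $c' \neq c\agent$. Given an index $i$ with $t_i > 0$, take $\mu^{\ast\prime}$ equal to $\mu^\ast$ on indices $j \neq i$, and set $\mu^{\ast\prime}_i$ to any other positive scalar. Define $c' := -Q\cloud x^\ast - A\cloud^\intercal \mu^{\ast\prime}$, and choose $b'_j := (A\cloud x^\ast)_j$ whenever $t_j \in \{0,+1\}$ and any $b'_j > (A\cloud x^\ast)_j$ whenever $t_j = -1$. The verification step then checks conditions \eqref{eq:KKT1}--\eqref{eq:KKT4} for the primal-dual pair $(x^\ast, \mu^{\ast\prime})$ on the perturbed instance, and uses the identity $\nabla g'(\mu^{\ast\prime}) = A\cloud x^\ast - b'$ (obtained from \eqref{eq:grad} with $c'$ substituted) to confirm that the new sign vector equals $t$. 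Since infinitely many positive values of $\mu^{\ast\prime}_i$ yield distinct $c'$, Definition~\ref{def:non-unique} is satisfied in its stronger infinite-set version.

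The step I expect to require the most care is showing that the single-coordinate perturbation of $\mu^\ast$ really preserves the entire sign vector $t$: the choice of $b'$ must realign the active, degenerate, and strictly inactive index sets exactly with those dictated by $t$, and primal feasibility $A\cloud x^\ast \preceq b'$ must be verified simultaneously with the sign identities for $\mu^{\ast\prime} + \nabla g'(\mu^{\ast\prime})$. This is essentially the mirror image of the argument behind Proposition~\ref{prop:b_distinguishability}, with the roles of primal feasibility/complementarity and dual nonnegativity swapped.
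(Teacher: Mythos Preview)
Your proposal is correct and follows the same line as the paper's own argument, which is only the informal paragraph immediately following the statement: the paper observes that $t_i \leq 0$ for all $i$ forces $\mu^\ast = 0$ and hence $c\agent = -Q\cloud x^\ast$ via \eqref{eq:KKT1}, and leaves the converse implicit. Your explicit construction for the ``if'' direction---perturbing a single positive dual coordinate and realigning $b'$ with the active/degenerate/inactive pattern dictated by $t$---is a natural and correct way to fill in what the paper omits, and your identification of the sign-preservation check as the delicate step is accurate.
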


The case $t_i> 0$ corresponds now to the case where the corresponding constraint is active at the optimal solution $x^\ast$. When this fails, all constraints are inactive at $x^\ast$ and they do not play a role. Hence, we have an unconstrained quadratic problem in \eqref{eq:problem}, i.e., the optimal solution satisfies the first order condition $Q\cloud x^\ast + c\agent = 0$, which reveals the value of $c\agent$ to the target. To guarantee privacy with respect to both private values $b\agent, c\agent$ we need both an inactive constraint ($t_i <  0$) as well as an active constraint ($t_j >  0$) for some $i,j \in \{1, \ldots, m \}$, leaving some uncertainty in the estimations performed by the target node. Similar analysis may be performed for collusions, e.g., of the target node and some agents. 

Finally, we note that here we analyzed the non-unique retrieval properties of the problem considering black-box protocols. It is of value to also consider additional steps when designing a protocol that solves Problem~\eqref{eq:problem} to prevent information leakage if the conditions in the propositions are not satisfied. It may be possible to use techniques such as Differential Privacy to augment the cryptographic privacy of the protocol with output privacy, by adding correlated data to the inputs. This approach lies out of the scope of our paper, but works such as~\cite{Huang15,Han17,Nozari18} may be helpful to this end.
%


\section{Implementation}\label{sec:complexity_time}

The efficiency of a secure multi-party protocol is measured in the complexity of the computations performed by each party, along with the rounds of communications between the parties. While the former is relevant from the perspective of the level of computational power required, the latter are relevant when communication is slow. 

Let $\sigma$ be the bit-length of the modulus $N$. Then, the size of a Paillier ciphertext is $2\sigma$, regardless of the size of the plaintext in $\mathbb Z_N$. 
A Paillier encryption takes one online exponentiation and one multiplication modulo $N^2$, and a decryption takes one exponentiation modulo $N^2$ and one multiplication modulo $N$. The addition of two ciphertexts takes one modular multiplication modulo $N^2$. A multiplication of a ciphertext with a plaintext of $l$ bits is achieved as a modular exponentiation modulo $N^2$. A multiplication of two elements in $\mathbb Z^\ast_{N^2}$ can be achieved in $O((2\sigma)^{1.7})$. A modular exponentiation in $N^2$ with an $l$-bit exponent can be computed in $lO((2\sigma)^2)$ and can be sped up via the Chinese Remainder Theorem when the factorization of $N$ is known.
A DGK encryption takes one modular exponentiation and one modular multiplication in $\mathbb Z^\ast_{N_{DGK}}$, and a DGK decryption -- checking if the encrypted message is 0 or not -- takes an exponentiation modulo $p_{DGK}$.

In the setup we considered, the agents are low-power platforms and they are only required to effectuate one encryption and send one message, but the cloud and the target node are platforms with higher computational capabilities. Protocol~\ref{alg:main_alg} involves $\mc O(K)$ matrix-encrypted vector multiplications and $\mc O(Km)$ encrypted scalar additions performed at the cloud, $\mc O(Kml)$ DGK decryptions, $\mc O(Km)$ Paillier decryptions and $\mc O(Km)$ encrypted scalar additions and scalar-encrypted scalar multiplications at the target node, with $\mc O(K)$ batches of messages communicated between the two. The random numbers and their encryptions can be precomputed.

We implemented the protocol proposed in Section~\ref{subsec:protocol} in Python 3 and ran it on a 2.2 GHz Intel Core i7 processor. For random instances of the data in Problem~\eqref{eq:problem}, with $\sigma=1024$ and $l=32$ bits, and 0, 10 and 20 ms delay for communication, we obtain the average running times depicted in Figure~\ref{fig:results_delay} and Figure~\ref{fig:results_delay_alt}. The simulations are run for 30 iterations, for ease of comparison. 

Figure~\ref{fig:results_delay} depicts the online execution time of Protocol~\ref{alg:main_alg} with the iterations as in Protocol~\ref{alg:iteration}. As expected, because we work in the dual space, the time required for running the iterations varies very little with the number of variables, but increases with the number of constraints. 
However, in the case of Figure~\ref{fig:results_delay_alt}, which depicts the online execution time of Protocol~\ref{alg:main_alg} with the iterations as in Protocol~\ref{alg:iteration_alternative}, the time varies more with the number of variables $n$. The reason is that we work with substantially larger numbers than in the previous case, due to the large factor with which we have to multiply in order to be able to de-mask the dual iterate, which amplifies the time difference between problems of different dimensions.

The trade-off between the privacy and communication can be seen when we artificially add a communication delay of 10 ms, respectively 20 ms between the cloud and the target node, to simulate the delays that can occur on a communication network. It can be observed in Figures~\ref{fig:results_delay} and~\ref{fig:results_delay_alt} that the communication delay has a much smaller effect on the less private Protocol~\ref{alg:iteration_alternative}, which does not use the secure comparison protocol, than on the fully private Protocol~\ref{alg:iteration}. 

\begin{figure}
  \centering
    \includegraphics[width=0.45\textwidth]{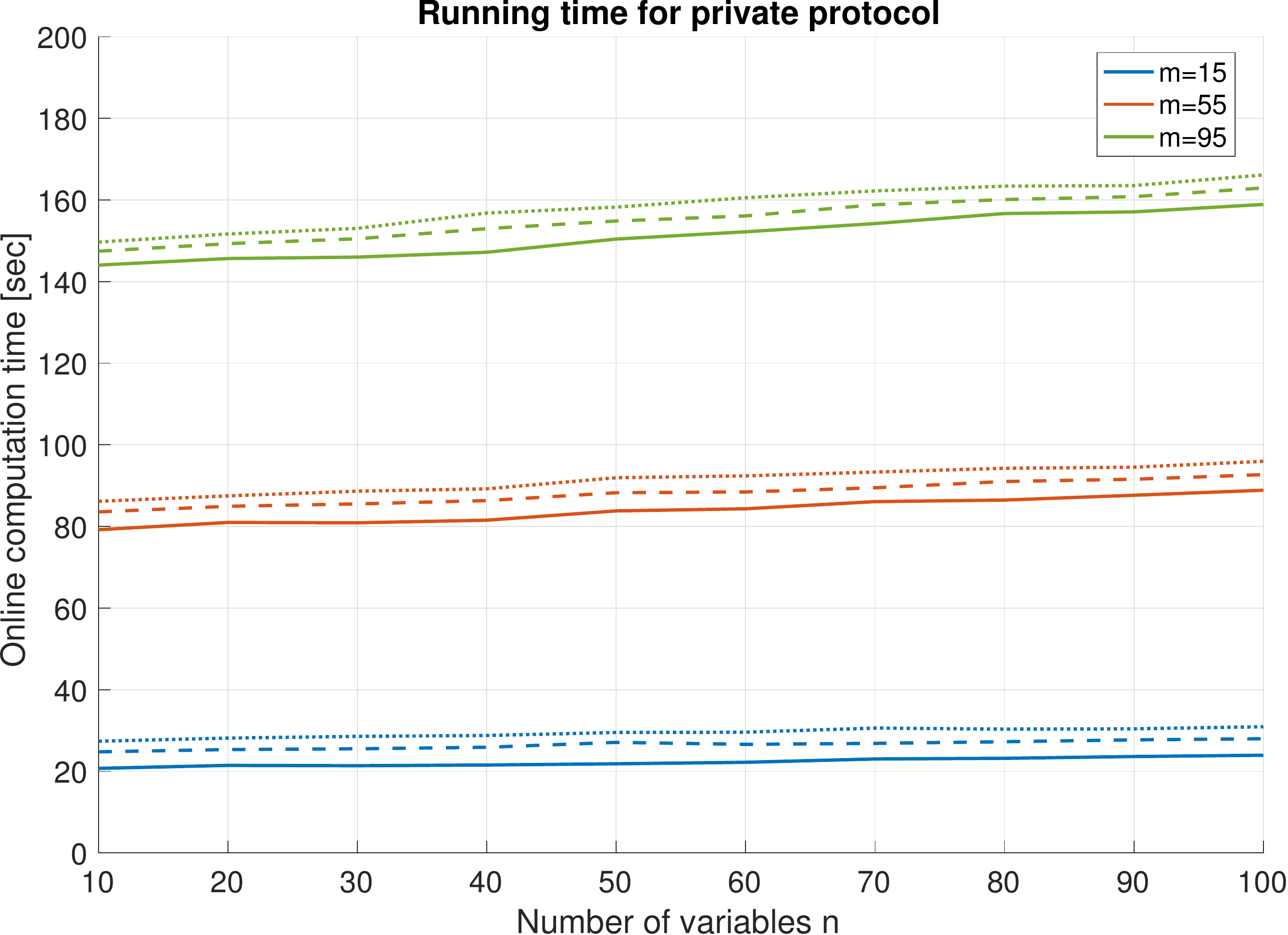}
  \caption{Comparison between the average running time of Protocol~\ref{alg:main_alg} with the iterations as in Protocol~\ref{alg:iteration} for problem instances with the number of variables on the abscissa and the number of constraints in the legend. The full lines correspond to the running times of the protocol with no communication delay, the dashed lines correspond to a 10 ms delay and the dotted lines to a 20 ms communication delay. The simulation is run for 30 iterations, a 1024 bit key and 32 bit messages, with 16 bit precision. The statistical parameter for additive blinding is 100 bits.}
   \label{fig:results_delay}
\end{figure}

\begin{figure}
  \centering
    \includegraphics[width=0.45\textwidth]{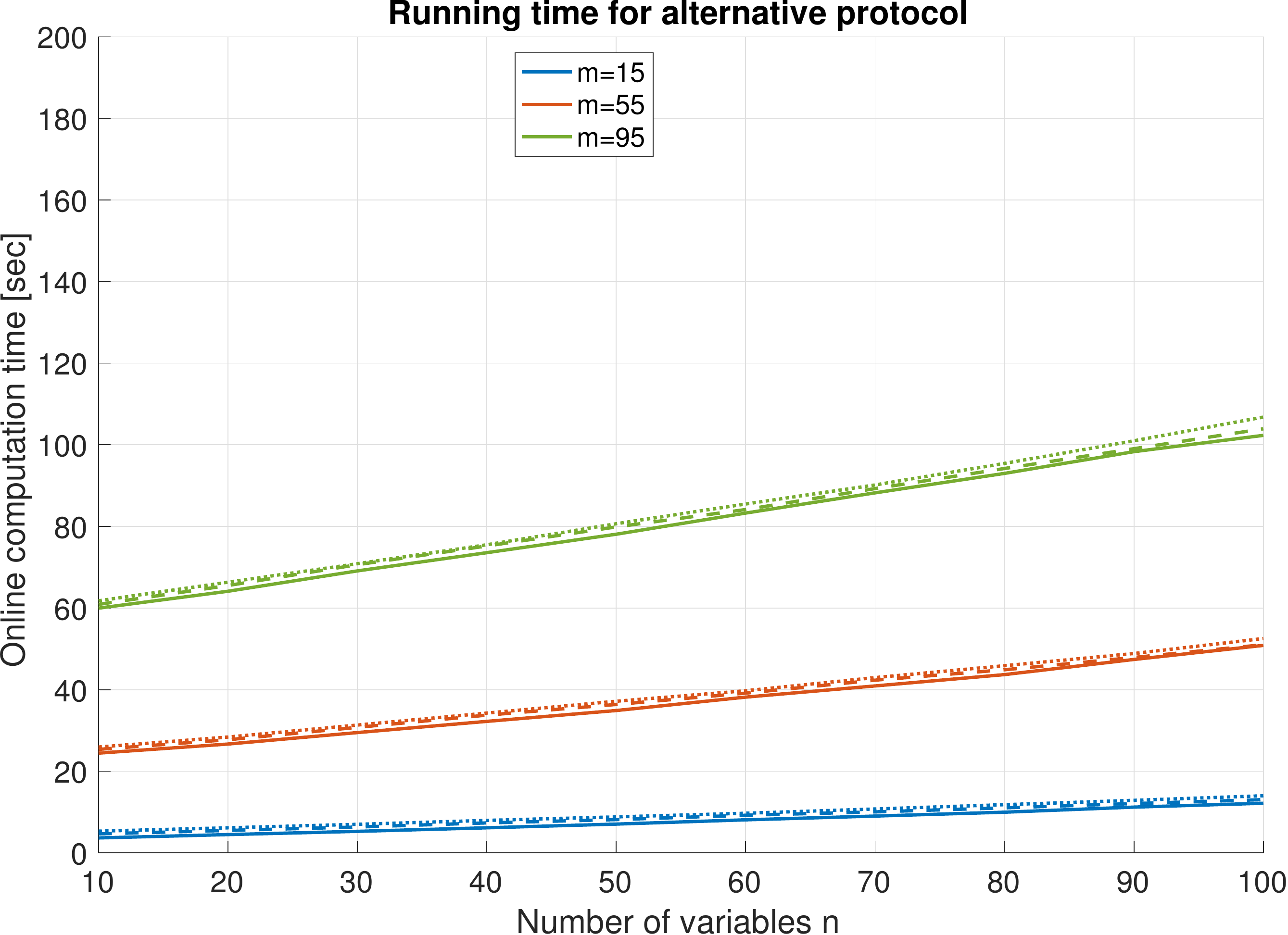}
  \caption{Comparison between the average running time of Protocol~\ref{alg:main_alg} with the iterations as in Protocol~\ref{alg:iteration_alternative} for problem instances with the number of variables on the abscissa and the number of constraints in the legend. The full lines correspond to the running times of the protocol with no communication delay, the dashed lines correspond to a 10 ms delay and the dotted lines to a 20 ms communication delay. The simulation is run for 30 iterations, a 1024 bit key and 32 bit messages, with 16 bit precision. The statistical parameter for multiplicative blinding is 40~bits.}
   \label{fig:results_delay_alt}
\end{figure}
%

\section*{Acknowledgment}

This work was partially sponsored by the NSF awards 1739816, 1705135, CNS-1505799 and the Intel-NSF Partnership for Cyber-Physical Security and Privacy, ONR N00014-17-1-2012, and by TerraSwarm, one of six centers of STARnet, a Semiconductor Research Corporation program sponsored by MARCO and DARPA. The U.S. Government is authorized to reproduce and distribute reprints for Governmental purposes notwithstanding any copyright notation thereon. The views and conclusions contained herein are those of the authors and should not be interpreted as necessarily representing the official policies or endorsements, either expressed or implied, of NSF, or the U.S. Government. Andreea Alexandru would like to thank Brett Hemenway for helpful discussions.


\ifCLASSOPTIONcaptionsoff
  \newpage
\fi

\bibliographystyle{IEEEtran}
\bibliography{IEEEabrv,biblo}


\appendices

\section{Cryptographic tools}\label{app:cryptographic_prel}

\allowdisplaybreaks
\setcounter{theorem}{0}
     \renewcommand{\thetheorem}{\Alph{section}.\arabic{theorem}}
\setcounter{definition}{0}
     \renewcommand{\thedefinition}{\Alph{section}.\arabic{definition}}
\setcounter{lemma}{0}
     \renewcommand{\thelemma}{\Alph{section}.\arabic{lemma}}
\setcounter{proposition}{0}
     \renewcommand{\theproposition}{\Alph{section}.\arabic{proposition}}
\setcounter{remark}{0}
     \renewcommand{\theremark}{\Alph{section}.\arabic{remark}}     
\setcounter{corollary}{0}
     \renewcommand{\thecorollary}{\Alph{section}.\arabic{corollary}}    

Proving a protocol is secure in the semi-honest model involves a heavy use of tools from cryptography, which we present here for the interested reader. 
In this Appendix, we aim to give closure to the security notions introduced in Section~\ref{sec:privacy_goals}, as well as provide the details of the Paillier encryptions scheme, used throughout the paper and generically introduced in Section~\ref{sec:encryption_sch}. Furthermore, we detail the tools used in the proof of Theorem~\ref{thm:main_alg}, such as semantic security.

\subsection{Security of multi-party protocols in the semi-honest model}\label{subsec:appendix_security}

Definition~\ref{def:view} and Definition~\ref{def:coalition_view} state that the view of any of the parties participating in the protocol, on each possible set of inputs, can be simulated based only on its own input and output. 
The intuition behind these definitions can also be viewed as follows: consider a \mbox{black-box} protocol that receives the inputs of Problem~\eqref{eq:problem} and solves it, not exchanging any messages in order to achieve the solution. Such a \mbox{black-box} protocol satisfies Definitions~\ref{def:view} and~\ref{def:coalition_view}. 
In other words, meeting the requirements of the above definitions is equivalent to not revealing any other information than what is already known to any of the parties, i.e. inputs, prescribed outputs, if any, and previously known side information, meaning details about the optimization algorithm. 

\begin{remark}\label{remark:aux}(\cite[Ch.~7]{Goldreich04foundations||},\cite[Ch.~4]{Goldreich03foundations|})
	\textbf{Auxiliary inputs}, which are inputs that capture additional information available to each of the parties (e.g. local configurations, side-information), are implicit in Definition~\ref{def:view} and Definition~\ref{def:coalition_view}. 
\end{remark}

\begin{remark}~\label{remark:leak}
Satisfying Definitions~\ref{def:view} and~\ref{def:coalition_view} is a stronger requirement than guaranteeing that an adversary cannot uniquely retrieve the data of the honest parties, i.e. Definition~\ref{def:non-unique}. 
\end{remark}

Revealing sensitive information does not always lead to a unique retrieval of the private data. Nevertheless, any piece of information revealed by the execution of the protocol, that cannot be obtained only from its inputs and outputs, leads to the violation of Definitions~\ref{def:view},~\ref{def:coalition_view}, even if the private data cannot be singled out with this information.

\subsection{Paillier cryptosystem}\label{subsec:appendix_paillier}
In this part, we will provide the rest of the details regarding the implementation of the Paillier cryptosystem, introduced in Section~\ref{sec:encryption_sch}. The pair of keys corresponding to this cryptosystem is $(pk,sk)$, where the public key is $pk = (N,g)$ and the secret key is $sk = (\gamma,\delta)$. $N$ is called the modulus and is the product of two large prime numbers $p,q$, and $g$ is an element of order $N$ in the multiplicative group $\mathbb Z^\ast_{N^2}$. Furthermore, \[\gamma = lcm(p-1,q-1),~\delta = ((g^\gamma~\text{mod}~N^2 -1)/N)^{-1}~\text{mod}~N.\]
A simpler variant to generate the pair of keys when $p$ and $q$ have the same number of bits is to choose $g = N+1$, $\gamma = \phi(N) = (p-1)(q-1)$, where $\phi$ is Euler's totient function and $\delta = \phi(N)^{-1}\bmod N$.

For a plaintext $a\in \mathbb Z_N$, the Paillier encryption is:
\begin{equation}
[[a]] = g^ar^N\bmod N^2,
\end{equation}
 where $r$ is a random non-zero element in $\mathbb Z_N$, which makes Paillier a probabilistic encryption scheme. Therefore, the ciphertexts do not preserve the order relation between the plaintexts. The decryption primitive is the following: 
\begin{equation}
a = (([[a]]^\gamma\bmod N^2-1)/N)\delta\bmod N,
\end{equation}
which uses the fact that $(1+N)^a = 1+Na \bmod N^2$. 

In the Paillier scheme, in order to obtain addition between plaintexts, the operation between ciphertexts is modular multiplication, which was denoted by $\oplus$ in the text: 
\begin{align}
\begin{split}
	[[a]]\cdot[[b]] &= g^ar^N\bmod N^2 \cdot g^br'^N\bmod N^2 \\ &= g^{a+b}(rr')^N\bmod N^2 \\ &= [[a + b]]\bmod N^2.
\end{split}
\end{align}

Negation is achieved by modular inverse:
\begin{align}
\begin{split}
[[a]]^{-1} &= g^{-a}(r^{-1})^N\bmod N^2 \\ &= [[-a]]\bmod N^2.
\end{split}
\end{align}

The multiplication between a plaintext value $c$ and an encrypted value $[[a]]$, which was denoted by $c\otimes[[a]]$ in the text, is obtained in the following way:
\begin{equation}
[[a]]^{c} = g^{ac}(r^c)^N\bmod N^2 = [[ca]].
\end{equation}

\begin{remark}
	The Paillier cryptosystem is malleable, which means that a party that does not have the private key can alter a ciphertext such that another valid ciphertext is obtained. Malleability is a desirable property in order to achieve cloud-outsourced computation on encrypted data, but allows ciphertext attacks. 
In this work, we assume that the parties have access to authenticated channels, therefore an adversary cannot alter the messages sent by the honest parties.
\end{remark}

\subsection{Semantic security of a cryptosystem}

Proving security in the \mbox{semi-honest} model involves the concepts of semantic security of an encryption scheme. Under the assumptions of decisional composite residuosity~\cite{Paillier99} and hardness of factoring, the partially homomorphic encryption schemes Paillier and DGK are semantically secure and have indistinguishable encryptions, which, in essence, means that an adversary that has the plaintext messages $a$ and $b$ cannot distinguish between the encryption $[[a]]$ and encryption $[[b]]$. 

\begin{definition}\label{def:semantic}(Semantic Security~\cite[Ch.~5]{Goldreich04foundations||})
An encryption scheme is \textbf{semantically secure} if for every probabilistic polynomial-time algorithm, A, there exists a probabilistic polynomial-time algorithm $A'$ such that for every two polynomially bounded functions $f,h:\{0,1\}^\ast\rightarrow \{0,1\}^\ast$ and for any probability ensemble $\{X_n\}_{n\in\mathbb N}$, $|X_n|\leq poly(n)$, for any positive polynomial $p$ and sufficiently large $n$: 
\begin{align*}
\text{Pr}& \left[ A(E(X_n),h(X_n),1^n) = f(X_n)\right] < \\ \text{Pr}& \left[ A'(h(X_n),1^n) = f(X_n)\right] + 1/p(n),
\end{align*}
where $E(\cdot)$ is the encryption primitive. 
\end{definition}
The security parameter $n$ is given as input in the form $1^n$ to $A$ and $A'$ to ensure they can run in $\text{poly}(n)$-time.
%


\section{Comparison protocol}\label{app:comparison}

\setcounter{theorem}{0}
     \renewcommand{\thetheorem}{\Alph{section}.\arabic{theorem}}
\setcounter{definition}{0}
     \renewcommand{\thedefinition}{\Alph{section}.\arabic{definition}}
\setcounter{lemma}{0}
     \renewcommand{\thelemma}{\Alph{section}.\arabic{lemma}}
\setcounter{proposition}{0}
     \renewcommand{\theproposition}{\Alph{section}.\arabic{proposition}}
\setcounter{remark}{0}
     \renewcommand{\theremark}{\Alph{section}.\arabic{remark}}     
\setcounter{corollary}{0}
     \renewcommand{\thecorollary}{\Alph{section}.\arabic{corollary}}  
Consider a \mbox{two-party} computation problem under an encryption scheme that does not support comparison between encrypted data. A number of secure comparison protocols on private inputs owned by two parties have been proposed in the literature~\cite{DGK07,Garay07,Kolesnikov09improved,Lipmaa13} etc., with a survey of the state of the art given in~\cite{Couteau16}. Most of the comparison protocols have linear complexity in the size of the inputs, since the comparison is done bitwise. Out of these, the DGK protocol proposed in~\cite{DGK07} with the correction in~\cite{DGK09correction} remains one of the most computationally efficient protocols, and~\cite{Kolesnikov09improved} is the most efficient for large-scale protocols.~\cite{Lipmaa13} proposes a comparison that is sublinear in the number of invocation of the cryptographic primitive, but has greater communication complexity and is only competitive for large inputs, due to the constants involved. Depending on the specific problem, some variants might be better than others. 
Since this comparison protocol is used as a block, it can be easily replaced in Protocol~\ref{alg:comp_DGKV} once more efficient protocol are proposed.

In~\cite{DGK07,DGK09correction}, Damg{\aa}rd, Geisler and Kr{\o}igaard describe a protocol for secure comparison protocol and towards that functionality, they propose the DGK additively homomorphic encryption scheme with the property that it is efficient to determine if the value zero is encrypted, which is useful for comparisons and when working with bits. The authors also prove the semantic security of the DGK cryptosystem under the hardness of factoring assumption.

The plaintext space for DGK is $\mathbb Z_u$, where $u$ is a small prime divisor of $p-1$ and $q-1$, where $p,q$ are large, same size prime numbers, and $N=pq$. Parameters $v_p$ and $v_q$ are $t$-bit prime divisors of $p-1$ and $q-1$. The numbers $g$ and $h$ are elements of $\mathbb Z_N^\ast$ of order $uv_pv_q$ and $v_pv_q$. 
The DGK encryption scheme has public key $pk_{DGK} = (N,g,h,u)_{DGK}$ and $sk_{DGK} = (p,q,v_p,v_q)_{DGK}$. 
For a plaintext $x\in\mathbb Z_u$, a DGK encryption is:
\[[a]=g^ah^r\bmod N,\]
with $r$ a random $2t$-bits integer, which makes DGK a probabilistic encryption scheme. We use this encryption scheme with the purpose of encrypting and decrypting bits that represent comparison results, therefore, for decryption, we only need to check if the encrypted value is 0. It is enough to verify:
\[[a]^{v_pv_q}\bmod p = 1\]
to see if $a=0$. Since DGK is an additively homomorphic scheme, the following holds, where $D(\cdot)$ is the DGK decryption primitive: 
\begin{equation} 
a+b = D([a]\cdot [b] \bmod N),~~-a =D([a]^{-1}\bmod N).
\end{equation}
We will use again $\oplus$ and $\otimes$ to abstract the addition between and encrypted values, respectively, the multiplication between a plaintext and an encrypted values.

Protocol~\ref{alg:comp_DGKV} calls the DGK protocol, presented in~\cite[Protocol 3]{Veugen12}. 
The two parties have each their own private input $\alpha$, respectively $\beta$. Using the binary representations of $\alpha$ and $\beta$, the two parties exchange $l$ blinded values such that each of the parties will obtain a bit $\delta_{\mc C}\in\{0,1\}$ and $\delta_{\mc T}\in\{0,1\}$ that satisfy the following relation: $\delta_{\mc C}\veebar\delta_{\mc T} = (\alpha\leq \beta)$ after executing Protocol~\ref{alg:plain_DGKV}, where $\veebar$ denotes the exclusive or operation. Then, in Protocol~\ref{alg:comp_DGKV} line 8, $\mc C$ obtains the encrypted bit $[[t']]$ that satisfies $(t'=1)\Leftrightarrow (\beta<\alpha)$. 

To guarantee the security of the DGK scheme, we choose a key-size that makes factoring hard and set the randomization in the encryption primitive to be of length greater than $2t$ bits, for $t=160$. See more details in~\cite{DGK07,DGK09correction}.

\begin{protocol}[Protocol for secure \mbox{two-party} comparison with plaintext inputs using DGK \cite{DGK07,Veugen12}]
\label{alg:plain_DGKV}
\begin{algorithmic}[1]
\small
 \Require{$\mc C$: $\alpha$; $\mc T$: $\beta,sk\target,sk_{DGK}$}
 \Ensure{$\mc C$: $ \delta\cloud$; $\mc T$: $\delta\target$ such that $\delta\cloud\oplus\delta\target = (\alpha\leq\beta)$}
\State $\mc T$: send the encrypted bits $[\beta_i]$, $0\leq i <l$ to $\mc C$
\For{each $0\leq i <l$}
\State $\mc C$: $[\alpha_i\veebar \beta_i]\leftarrow \begin{cases} [\beta_i] &\text{ if } \alpha_i = 0\\ [1]\oplus(-1)\otimes[\beta_i] &\text{ otherwise} \end{cases}$
\EndFor
\State $\mc C$: Choose uniformly random bit $\delta\cloud\in\{0,1\}$
\State $\mc C$: Compute set $\mc L = \{i| 0\leq i <l \text{ and } \alpha_i = \delta\cloud \}$
\For{each $i\in\mc L$}
\State $\mc C$: compute $[c_i]\leftarrow \sum_{j=i+1}^l [\alpha_j\veebar\beta_j]$ \Comment encrypted sum
\State $\mc C$: $[c_i]\leftarrow \begin{cases}  [1]\oplus[c_i]\oplus(-1)\otimes[\beta_i] &\text{ if } \delta\cloud = 0\\ [1]\oplus[c_i] &\text{ otherwise} \end{cases}$
\EndFor
\State $\mc C$: generate uniformly random non-zero values $r_i$ of $2t$ bits, $0\leq i <l$
\For{each $0\leq i <l$}
\State $\mc C$: $[c_i]\leftarrow \begin{cases} r_i\otimes[c_i] & \text{ if } i\in\mc L \\  [r_i] & \text{ otherwise} \end{cases}$
\EndFor
\State $\mc C$: send the values $[c_i]$ in random order to $\mc T$
\State $\mc T$: if at least one of the values $c_i$ is decrypted to zero, set $\delta\target \leftarrow 1$, otherwise set $\delta\target\leftarrow 0$
\end{algorithmic}
\end{protocol}


\section{Proof of main results}\label{app:proof}

\allowdisplaybreaks
\setcounter{theorem}{0}
     \renewcommand{\thetheorem}{\Alph{section}.\arabic{theorem}}
\setcounter{definition}{0}
     \renewcommand{\thedefinition}{\Alph{section}.\arabic{definition}}
\setcounter{lemma}{0}
     \renewcommand{\thelemma}{\Alph{section}.\arabic{lemma}}
\setcounter{proposition}{0}
     \renewcommand{\theproposition}{\Alph{section}.\arabic{proposition}}
\setcounter{remark}{0}
     \renewcommand{\theremark}{\Alph{section}.\arabic{remark}}     
\setcounter{corollary}{0}
     \renewcommand{\thecorollary}{\Alph{section}.\arabic{corollary}}

\subsection{Proof of Theorem~\ref{thm:main_alg}}\label{subsec:proof_thm1}
For simplicity of the exposition, we avoid mentioning the public data (public keys $pk\target, pk_{DGK}$, number of iterations $K$ and number of bits $l$) in the views, since they are public. 
In what follows, we will successively discuss the views of each type of party participating in the protocol: agents, cloud and target node. As mentioned in Definitions~\ref{def:view} and~\ref{def:coalition_view}, the views of the parties during the execution of Protocol~\ref{alg:main_alg} are constructed on all the inputs of the parties involved and symbolize the real values the parties get in the execution of the protocol. We will denote by $\mc I$ the inputs of all the parties:
\[\mc I = \{b\agent,c\agent,A\cloud,Q\cloud,sk\target,sk_{DGK}\}.\]
Furthermore, in order to construct a simulator that simulates the actions of a party, we need to feed into it the inputs and outputs of that respective party. 

\subsubsection{Simulator for one agent $\mc A_i$}
Agent $\mc A_i$, for every $i=1,\ldots,p$, has the following inputs $I_{\mc A_i} = (\{b_j\}_{j=1,\ldots,m_i},$ $\{c_j\}_{j=1,\ldots,n_i})$. To avoid cluttering, we will drop the subscripts $j=1,\ldots,m_i$ and ${j=1,\ldots,n_i}$. Then agent $\mc A_i$ has the following view: 
\[V_{\mc A_i}(\mc I):= (b_j, c_j, [[b_j]], [[c_j]],\text{coins}), \]
where coins represent the random values used for encryption.

The agents are only online to send their encrypted data to the cloud, and they do not have any role and output afterwards. Hence, a simulator $S_{\mc A_i}$ would simply generate the random values necessary to encrypt its inputs as specified by the protocol and output the view obtained such:
\[S_{\mc A_i}:= (b_j, c_j, \widetilde{[[b_j]]}, \widetilde{[[c_j]]}, \widetilde{\text{coins}}), \]

where by $\widetilde{(\cdot)}$ we denote the quantities obtained by the simulator, which are different than the quantities of the agents, but follow the same distribution. Hence, it is trivial to see that the protocol is secure in the semi-honest model from the point of view of the interaction with the agents.

Next, we want to prove the privacy of the protocol from the point of view of the interactions with the cloud and the target node. We will construct a sequence of algorithms such that we obtain that the view of the real parties after the execution of $K$ iterations is the same as the view of simulators that simply execute $K$ iterations with random exchanged messages.

\subsubsection{Simulator for the cloud $\mc C$}\label{subsubsec:cloud}
For the simplicity of the exposition, we will treat $\mu$, $b\agent$ and $c\agent$ as scalars. The same steps are repeated for every element in the vectors. For clarity, we will form the view of the cloud in steps, using pointers to the lines in Protocol~\ref{alg:main_alg}. 
The view of the cloud during the execution of lines 5-6 is:
\begin{equation}\label{eq:view_cloud_-1}
V\cloud^{-1}(\mc I) = \big ( A\cloud,Q\cloud,\eta,[[b\agent]],[[c\agent]],[[\mu_0]],\text{coins}\big)=:I\cloud^{-1},
\end{equation}
where coins represent the random values generated for the Paillier encryption. Furthermore, we construct the view of the cloud at iteration $k=0,\ldots,K-1$ during the execution of an instance of Protocol~\ref{alg:iteration}, which will be constructed on the inputs of all parties: the inputs $\mc I$ and the data the parties had at iteration $k-1$. We denote the view of the cloud at iteration $k-1$ by $I\cloud^{k-1}$ which, along with $\mc I$ and the view of the target node at iteration $k-1$, denoted by $I\target^{k-1}$~\eqref{eq:view_target_k}, will be what the view is constructed on at iteration $k$. 
\begin{align*}
&\bar{\mc I}^{k-1} := \mc I \cup I\cloud^{k-1} \cup I\target^{k-1},\numberthis \label{eq:inputs_k}\\
&I\cloud^{k}:=V\cloud^k(\bar{\mc I}^{k-1}) = \big(I\cloud^{k-1},[[\mu_k]],[[\bar\mu_k]],\underbrace{\pi_k,\text{coins}_{1k}}_{Protocol~\ref{alg:rand_step}}, \numberthis \label{eq:view_cloud_k}\\
&\underbrace{\rho_k,[[t_k]],\text{m}^{\text{comp}_k},\text{coins}_{2k}}_{Protocol~\ref{alg:comp_DGKV}}, \underbrace{r_k,s_k,[[v_k]],[[\mu_{k+1}]],\text{coins}_{3k}}_{Protocol~\ref{alg:update_step}}\big),
\end{align*}
where $\text{m}^{\text{comp}_k}$ are messages exchanged in the comparison protocol, and $\text{coins}_{jk}$, for $j=1,2,3$ are the random numbers generated in Protocol~$j$. Finally, the view of the cloud after the execution of line 11 in Protocol~\ref{alg:main_alg} is:
\begin{equation}\label{eq:view_cloud_K}
V\cloud^K(\bar{\mc I}^{K-1}) = \big (I^{K-1}\cloud,[[x^\ast]]).
\end{equation}

Therefore, the view of the cloud during the whole execution of Protocol~\ref{alg:main_alg} is:
\[V\cloud(\mc I) := V\cloud^K(\bar{\mc I}^{K-1}).\]

We first construct a simulator on the inputs $I\cloud = \{A\cloud, Q\cloud\}$ that mimics $V^{-1}\cloud(\mc I)$ in~\eqref{eq:view_cloud_-1}:
\begin{itemize}[wide, labelwidth=!, labelindent=0pt]
	\item Generate $n+m$ random numbers of $l$ bits $\widetilde {b\agent},\widetilde{c\agent}$;
	\item Generate a random positive initial value $\widetilde{\mu_0}$;
	\item Generate $n+m+1$ uniformly random numbers for the Paillier encryption and denote them $\widetilde{\text{coins}}$;
	\item Compute $[[\widetilde {b\agent}]],[[\widetilde{c\agent}]],[[\widetilde{\mu_0}]]$;
	\item Compute $\eta$ following line 6;
	\item Output $\widetilde I\cloud^{-1} := S^{-1}\cloud(I\cloud) = \big (A\cloud, Q\cloud,[[\widetilde{b\agent}]],[[\widetilde{c\agent}]],\eta,[[\widetilde{\mu_0}]],$ $\widetilde{\text{coins}}\big)$.
\end{itemize}

Since Protocol~\ref{alg:iteration} is secure in the semi-honest model (Proposition~\ref{prop:iteration}), we know that there exists a probabilistic polynomial-time (ppt) simulator for the functionality of Protocol~\ref{alg:iteration} on inputs $(A\cloud,Q\cloud,[[b\agent]],[[c\agent]],\eta,[[\mu_k]])$ and output $[[\mu_{k+1}]]$. However, we need to show that we can simulate the functionality of consecutive calls of Protocol~\ref{alg:iteration}, or, equivalently, on one call of Protocol~\ref{alg:iteration} but on the augmented input that contains the data of the cloud in the previous iterations. Call such a simulator $S^k\cloud$, that on the input $I^{k-1}\cloud$ mimics $V^k\cloud(\bar{\mc I}^{k-1})$ in~\eqref{eq:view_cloud_k}, for $k=0,\ldots,K-1$:
\begin{itemize}[wide, labelwidth=!, labelindent=0pt]
	\item Compute $[[\nabla g(\mu_k)]]$ and $[[\bar \mu_k]]$ as in lines 1-2 of Protocol~\ref{alg:iteration} from $[[\mu_k]],[[b\agent]],[[c\agent]]$ which are included in $I^{k-1}\cloud$;
	\item Generate a random permutation $\widetilde{\pi_k}$ and apply it on $([[0]],[[\bar \mu_k]])$ as in Protocol~\ref{alg:rand_step};
	\item Follow Protocol~\ref{alg:comp_DGKV} and replace the incoming messages by DGK encryptions of appropriately sized random values to obtain $\widetilde{\rho_k},\widetilde{\text{m}^{\text{comp}_k}}$;
	\item Generate random bit $\widetilde{t_k}$ and its encryption $[[\widetilde{t_k}]]$;
	\item Generate random values $\widetilde{r_k}$ and $\widetilde{s_k}$ as in line 1 in Protocol~\ref{alg:update_step} and their encryptions $[[\widetilde{r_k}]],[[\widetilde{s_k}]]$;
	\item Obtain $[[\widetilde{v_k}]]$ by choosing between the elements of $\widetilde{\pi_k}([[0]],[[\bar\mu_k]])+(\widetilde{r_k},\widetilde{s_k})$ according to the generated $\widetilde{t_k}$;
	\item Compute $[[\widetilde{\mu_{k+1}}]]$ as in line 8 of Protocol~\ref{alg:update_step};
	\item Denote the rest of the random values used for encryption and blinding by $\widetilde{\text{coins}_k}$;
	\item Output $\widetilde I\cloud^k :=S^k\cloud(I^{k-1}\cloud)=(I^{k-1}\cloud,[[\bar \mu_k]],[[\widetilde{\pi_k}]],[[\widetilde{z_k}]],$ $\widetilde{\text{m}^{\text{comp}_k}},[[\widetilde{t_k}]],[[\widetilde{r_k}]],[[\widetilde{s_k}]],[[\widetilde{v_k}]],[[\widetilde{\mu_{k+1}}]],\widetilde{\text{coins}_k})$.
\end{itemize}

Finally, a trivial simulator $\widetilde I\cloud^K:= S^K\cloud(I^{K-1}\cloud)$ for $V^K\cloud(\bar{\mc I}^{K-1})$ is obtained by simply performing line 11 on the inputs.

\begin{proposition}\label{prop:sim_cloud}
	$S\cloud^{k}(I\cloud^{k-1})\stackrel{c}{\equiv}V^{k}\cloud(\bar{\mc I}^{k-1})$, for \mbox{$k=-1,\ldots,K$}, where $I\cloud^{k-2}:=I\cloud$.
\end{proposition}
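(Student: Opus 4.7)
The plan is to prove the proposition by induction on $k$, reducing indistinguishability of the cloud's views at each iteration to three building blocks: semantic security of Paillier, semantic security of DGK, and statistical indistinguishability of additively blinded values from uniform integers of appropriate length (Section~\ref{subsec:symmetric}).

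For the base case $k=-1$, the simulator encrypts freshly sampled random plaintexts $\widetilde{b\agent}, \widetilde{c\agent}, \widetilde{\mu_0}$ under $pk\target$, whereas $V\cloud^{-1}(\mc I)$ contains encryptions of the actual private values. Because the cloud does not hold $sk\target$, semantic security (Definition~\ref{def:semantic}) gives $[[b\agent]] \stackrel{c}{\equiv} [[\widetilde{b\agent}]]$, and analogously for $c\agent$ and $\mu_0$; the remaining entries $A\cloud, Q\cloud, \eta$ are identical on both sides. Combining these yields computational indistinguishability of $V\cloud^{-1}$ and $S\cloud^{-1}$.

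For the inductive step I assume $S\cloud^{k-1}(I\cloud^{k-2}) \stackrel{c}{\equiv} V\cloud^{k-1}(\bar{\mc I}^{k-2})$ and exhibit a chain of hybrids $H_0, H_1, \ldots, H_t$, where $H_0 = V\cloud^{k}(\bar{\mc I}^{k-1})$ and $H_t = S\cloud^{k}(I\cloud^{k-1})$, and each transition replaces a single item and is indistinguishable from its neighbour by one of the three tools above. The deterministic ciphertexts $[[\nabla g(\mu_k)]]$, $[[\bar\mu_k]]$ and $[[\mu_{k+1}]]$ are homomorphic images of data already covered by the inductive hypothesis and so carry over. The permutation $\pi_k$ is uniform in both worlds. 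The scalar $z_k = b_k - a_k + 2^l + \rho_k$ that leaves the cloud in Protocol~\ref{alg:comp_DGKV}, together with $\bar a_k, \bar b_k$ in Protocol~\ref{alg:update_step}, is masked by a fresh uniform $(l+\lambda)$-bit value and is therefore statistically indistinguishable from a uniform draw. The DGK ciphertexts exchanged during the bit-wise comparison, the encrypted bit $[[\delta\target]]$ returned by $\mc T$, and the refreshed $[[v_k]]$ are ciphertexts under keys whose secret parts the cloud does not possess, so each can be swapped for an encryption of a random message by Paillier or DGK semantic security. Summing the negligible distinguishing advantages over the constant number of hybrids yields $H_0 \stackrel{c}{\equiv} H_t$, and combining with the inductive hypothesis gives the step.

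The terminal case $k=K$ is immediate: $[[x^\ast]]$ is a publicly known linear combination of ciphertexts already in $I\cloud^{K-1}$ (line~11 of Protocol~\ref{alg:main_alg}) and introduces no new reducible message. The main obstacle I expect is the bookkeeping in the hybrid chain: ensuring each reduction uses a distinguisher that correctly reconstructs the auxiliary information from earlier iterations, and that the total distinguishing advantage accumulated across all $O(K)$ iterations and the constant number of hybrids per iteration remains negligible in the security parameters $\sigma$ and $\lambda$.
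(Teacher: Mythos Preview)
Your overall reliance on semantic security of Paillier and DGK is correct and matches the paper, but there are two structural mismatches worth noting.

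First, the induction is superfluous for this proposition. In the paper, $S\cloud^{k}$ is fed the \emph{real} prefix $I\cloud^{k-1}$ (not $\widetilde I\cloud^{k-1}$), so the statement $S\cloud^{k}(I\cloud^{k-1})\stackrel{c}{\equiv}V\cloud^{k}(\bar{\mc I}^{k-1})$ is proved directly for each $k$: the prefix $I\cloud^{k-1}$ is literally identical on both sides, and only the fresh items at iteration $k$ need to be compared. The deterministic ciphertexts $[[\nabla g(\mu_k)]]$ and $[[\bar\mu_k]]$ are computed from that shared real prefix and are therefore \emph{equal}, not merely indistinguishable via an inductive hypothesis. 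The accumulation over $K$ iterations that you worry about in your last paragraph is handled \emph{after} this proposition, in the separate hybrid chain $H_{-1},\ldots,H_{K+1}$ built from Corollary~\ref{corr:consec_steps}; it is not part of Proposition~\ref{prop:sim_cloud} itself.

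Second, you invoke the statistical-blinding argument for $z_k$, $\bar a_k$, $\bar b_k$, but those are \emph{outgoing} values from the cloud; the cloud never sees their plaintexts, only the ciphertexts it forms itself. The statistical indistinguishability of blinded plaintexts from uniform integers is the tool for the \emph{target's} simulator (Proposition~\ref{prop:sim_target}), where $\mc T$ decrypts and actually observes $z_k$, $\bar a_k$, $\bar b_k$. For the cloud's view the paper's argument is simpler: the cloud's own coins $\pi_k,\rho_k,r_k,s_k,\delta\cloud$ are sampled from identical distributions in both worlds, and every \emph{incoming} message ($[[t_k]]$, $[[v_k]]$, the DGK-encrypted bits, $[[\delta\target]]$, $[[z\div 2^l]]$) is a ciphertext under a key the cloud lacks, so semantic security suffices. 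Replacing your blinding steps with this cleaner split would align your hybrid chain with the paper's direct per-item argument.
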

\begin{proof}
For $k=-1$, due to the fact that the coins and the initial iterate are generated by the simulator via the same distributions as specified in the protocol, and due to the semantic security of the Paillier encryption, which guarantees indistinguishability of encryptions, we obtain that $V^{-1}\cloud(\mc I)\stackrel{c}{\equiv}S^{-1}\cloud(I\cloud)$.

For $0\leq k\leq K-1$, $(\text{coins}_{1k},\text{coins}_{2k},\text{coins}_{3k})\stackrel{s}{\equiv}\widetilde{\text{coins}_k}$ because they are generated from the same distributions. Similarly, the distributions of $(\widetilde{\pi_k},\widetilde{\pi_k}([[0]],[[\bar \mu_k]]))$ and $(\pi_k,\pi_k([[0]],[[\bar \mu_k]]))$ are the same and $\widetilde{\pi_k},\pi_k$ are independent of the other parameters. The quantities from the DGK protocol $\widetilde{\rho_k}\stackrel{s}{\equiv}\rho_k$ and $\widetilde{\text{m}^{\text{comp}_k}}\stackrel{c}{\equiv}\text{m}^{\text{comp}_k}$ either due to the semantic security of the DGK protocol, e.g., $[\widetilde{t'}],[\widetilde{\delta\target}]$ or due to having the same distributions, e.g., $\widetilde{\rho_k}, \widetilde{\delta\cloud},\widetilde{\alpha}$. The values in the update step $\widetilde{r_k},\widetilde{s_k}$ are sampled from the same distribution as $r_k,s_k$, and, finally, $[[\widetilde{t_k}]],[[\widetilde v_k]],[[\widetilde{\mu_{k+1}}]]\stackrel{c}{\equiv}[[t_k]],[[v_k]],[[\mu_{k+1}]]$ due to the semantic security of the Paillier encryption. Thus, $S^{k}(I\cloud^{k-1})\stackrel{c}{\equiv}V^{k}\cloud(\bar{\mc I}^{k-1})$.

For $k=K$, $S\cloud^{K}(I\cloud^{K-1})\stackrel{c}{\equiv}V^{K}\cloud(\bar{\mc I}^{K-1})$ follows from the fact that the simulator simply executes the last part of the protocol and from the Paillier scheme's semantic security.
\end{proof}

Hence, we obtained that $I^k\cloud \stackrel{c}{\equiv} \widetilde I^k\cloud$, for $k=-1,\ldots,K$. 
The essence of the proof of Proposition~\ref{prop:sim_cloud} is that all the messages the cloud receives are encrypted. Then, thanks to the semantic security of the Paillier and DGK schemes, the extra information included in $\bar{\mc I}^{k-1}$ from the previous iterations cannot be used to extract other information about the values at iteration $k$. From this, we also have the next corollary:

\begin{corollary}\label{corr:consec_steps}
	$S^{k}\cloud(\widetilde I^{k-1}\cloud)\stackrel{c}{\equiv}S^{k}\cloud(I\cloud^{k-1})$, equivalent to $S^{k+1}\cloud(S^{k}\cloud( I\cloud^{k-1}))\stackrel{c}{\equiv}S^{k+1}\cloud(V^{k}\cloud(\bar{\mc I}^{k-1}))$, for any $k=0,\ldots,K-1$.
\end{corollary}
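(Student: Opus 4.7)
The plan is to prove this as a direct consequence of Proposition~\ref{prop:sim_cloud} together with the standard closure property of computational indistinguishability under probabilistic polynomial-time transformations. First I would observe that the two claimed equivalences are actually the same statement written twice: by definition $\widetilde I^{k-1}\cloud := S^{k-1}\cloud(I^{k-2}\cloud)$ and $\widetilde I^{k}\cloud := S^{k}\cloud(I^{k-1}\cloud)$, while $I^{k}\cloud = V^{k}\cloud(\bar{\mc I}^{k-1})$. So the second form $S^{k+1}\cloud(S^{k}\cloud(I^{k-1}\cloud)) \stackrel{c}{\equiv} S^{k+1}\cloud(V^{k}\cloud(\bar{\mc I}^{k-1}))$ is exactly the first form with the index shifted by one; it suffices to prove only one of them.

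Next I would invoke Proposition~\ref{prop:sim_cloud} at step $k-1$, which yields $\widetilde I^{k-1}\cloud \stackrel{c}{\equiv} I^{k-1}\cloud$ (that is, $S^{k-1}\cloud(I^{k-2}\cloud) \stackrel{c}{\equiv} V^{k-1}\cloud(\bar{\mc I}^{k-2})$). Now observe that $S^{k}\cloud$ is, by construction, a probabilistic polynomial-time algorithm: it performs a constant number of Paillier operations, generates uniformly random bits and blindings of bounded bit-length, and simulates Protocol~\ref{alg:comp_DGKV} by honest ppt encryptions of random values. Therefore, applying $S^{k}\cloud$ to two computationally indistinguishable input ensembles must yield computationally indistinguishable output ensembles.

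For the formal reduction, I would argue by contradiction: suppose there existed a ppt distinguisher $D$ with non-negligible advantage between $S^{k}\cloud(\widetilde I^{k-1}\cloud)$ and $S^{k}\cloud(I^{k-1}\cloud)$. Then the composition $D \circ S^{k}\cloud$ is itself a ppt algorithm (composition of ppt algorithms is ppt) that distinguishes $\widetilde I^{k-1}\cloud$ from $I^{k-1}\cloud$ with the same non-negligible advantage, directly contradicting Proposition~\ref{prop:sim_cloud}. Hence no such $D$ exists and $S^{k}\cloud(\widetilde I^{k-1}\cloud) \stackrel{c}{\equiv} S^{k}\cloud(I^{k-1}\cloud)$.

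The only mildly delicate point, and what I would expect to be the main thing to verify carefully, is that $S^{k}\cloud$ treats its input as a string and does not implicitly rely on the input being an actual real view (for instance, it should not call the Paillier decryption oracle on components of its input). Inspecting the construction of $S^{k}\cloud$ in the proof of Theorem~\ref{thm:main_alg}, every operation it performs is either a fresh random sample, a Paillier encryption under $pk\target$, an additive homomorphic operation $\oplus, \otimes$, or the honest arithmetic appearing in lines 1--2 of Protocol~\ref{alg:iteration} applied to encrypted inputs carried over from $I^{k-1}\cloud$. None of these requires the secret key $sk\target$, so the simulator acts uniformly as a ppt function on whatever ciphertext-vector it receives, which is exactly what the reduction needs.
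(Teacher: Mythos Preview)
Your argument is correct and matches the paper's approach: the paper states the corollary as an immediate consequence of Proposition~\ref{prop:sim_cloud} without a separate proof, and your closure-under-ppt-transformations reduction is exactly the standard way to make that implication precise. Your additional check that $S^{k}\cloud$ never invokes $sk\target$ is a useful point the paper leaves implicit.
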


Finally, we construct a simulator $S\cloud(I\cloud)$ for the execution of Protocol~\ref{alg:main_alg} and we will show that its view will be computationally indistinguishable from $V\cloud(\mc I)$. To this end, we define the following sequence of views -- obtained as hybrids between the real views and the views of the simulators:
\begin{align*}
	V\cloud(\mc I)=H_{-1}(\mc I) &= V^K\cloud (\bar {\mc I}^{K-1})\\
	H_0(\mc I) &= S^K\cloud \circ V\cloud^{K-1}(\bar {\mc I}^{K-2})\\
	H_1(\mc I) &= S^K\cloud \circ S\cloud^{K-1} \circ V\cloud^{K-2}(\bar {\mc I}^{K-3})\\
	&~\vdots\\
	H_{K}(\mc I) &= S^K\cloud\circ S^{K-1}\cloud\circ \ldots\circ S\cloud^0\circ V^{-1}\cloud(\mc I)\\
	S\cloud(I\cloud)=H_{K+1}(I\cloud) &= S^K\cloud\circ S^{K-1}\cloud\circ \ldots\circ S\cloud^0\circ S^{-1}\cloud(I\cloud).
\end{align*}

By transitivity, $H_{-1}$ and $H_{K+1}$ are computationally indistinguishable if:
\[
H_{-1} \stackrel{c}{\equiv} H_0\stackrel{c}{\equiv} \ldots  \stackrel{c}{\equiv} H_k \stackrel{c}{\equiv} H_{k+1} \stackrel{c}{\equiv} H_{k+2} \stackrel{c}{\equiv} \ldots \stackrel{c}{\equiv} H_{K+1}. 
\]
This result follows from induction on Corollary~\ref{corr:consec_steps}. 
In conclusion, we obtain that $S\cloud(I\cloud)\stackrel{c}{\equiv} V\cloud(\mc I)$, which verifies that Protocol~\ref{alg:main_alg} achieves privacy with respect to the cloud.

\subsubsection{Simulator for the target node $\mc T$}\label{subsubsec:target}
We proceed with the same steps in order to show that the consecutive $K$ iterations form a protocol that is secure in the semi-honest model from the point of view of the target node. We will use the secrecy of the one-time pad variant used for blinding. The symmetric encryption used in this paper, as discussed in Section~\ref{subsec:symmetric}, to which we refer as blinding, guarantees that a value of $l$ bits additively blinded by a random value of $l+\lambda$ bits is statistically indistinguishable from a random value of $l+\lambda+1$ bits.

The inputs and output of the target node in Protocol~\ref{alg:main_alg}~are $I\target = (sk\target,sk_{DGK},x^\ast)$. 
As in~\eqref{eq:inputs_k}, $\bar{\mc I}^{k-1}$ represents the~inputs of all the parties at iteration $k$, with $\bar{\mc I}^{-1} = \mc I$. Then, the view of the target node during iteration $k=0,\ldots,K-1$ is:
\begin{align*}
I\target^{k}:=&V\target^k(\bar{\mc I}^{k-1}) = (sk\target,sk_{DGK},\underbrace{z_k,t_k,\text{m}^{\text{comp}_k},\text{coins}_{2k}}_{Protocol~\ref{alg:comp_DGKV}},\\
& \underbrace{\bar a_k,\bar b_k, v_k,\text{coins}_{3k}}_{Protocol~\ref{alg:update_step}}). \numberthis\label{eq:view_target_k}
\end{align*}

The view of the target node during the last step of the protocol is:
\begin{equation}V\target^K(\bar{\mc I}^{K-1}) := (I\target^{K-1},[[x^\ast]]).\end{equation}

As before, the view of the target during the execution of Protocol~\ref{alg:main_alg} is:
\[V\target(\mc I) := V\target^K(\bar{\mc I}^{K-1}).
\]

In order to be able to construct a simulator with indistinguishable view from the view of the target node, we need to show that the target node is not capable of inferring new relevant information about the private data (other than what can be inferred solely from its inputs and outputs) although it knows the optimal solution $x^\ast$ and has access to the messages from multiple iterations. 
 
Apart from the last message, which is the encryption of the optimization solution $[[x^\ast]]$, and the comparison results $t_k$, all the values the target node receives are blinded, with different sufficiently large values at each iteration. 
The target node knows that $Q\cloud x^\ast = A\cloud^\intercal \mu_K - c\agent$ and that $\mu_K = v_K - \bar r_K$, for some random value $\bar r_K$. However, from these two equations, although it has access to $x^\ast$ and $v_K$, the target node cannot infer any new information about the private data of the agents or about $\mu_K$, even when $Q\cloud,A\cloud$ are public, thanks to the large enough randomization.

Let $a_k,b_k = \pi_k(0,\bar\mu_{k})$, where $\pi_k$ is a random permutation. From Protocol~\ref{alg:comp_DGKV}, the target receives $z_k$ which is the additively blinded value of $b_k-a_k+2^l$, and other blinded values, denoted in~\eqref{eq:view_target_k} as $\text{m}^{\text{comp}_k}$. Hence, provided the blinding noises are refreshed at every iteration, the target node cannot infer any information, as follows from Section~\ref{subsec:symmetric}. At the end of the protocol, it receives the bit $t_k$ which is 1 if $a_k\leq b_k$ and 0 otherwise. However, due to the uniform randomization between the order of $\bar\mu_k$ and $0$ at every iteration for assigning the values of $a_k$ and $b_k$, described in Protocol~\ref{alg:rand_step}, the target node cannot identify the sign of $\bar \mu_k$, and by induction, the sign of $\bar \mu_{K-1}$ and magnitude of $\mu_K$. This means that having access to $x^\ast = x_K$ does not bring more information about the blinded values from the intermediary steps. 

We now investigate the relation between the messages from consecutive iterations. From the update protocol~\ref{alg:update_step}, we know: 
\begin{align*}
 v_k  &= (a_k+r_k)(1-t_k) + (b_k+s_k)t_k \\ 
 \mu_{k+1} &= v_k - r_k(1-t_k) - s_kt_k = a_k(1-t_k) + b_kt_k,
 \end{align*}
where the variables are vectors. The target node knows the values of $v_k$ and $t_k$, but $\pi_k,\bar\mu_k,r_k,s_k$ are unknown to it. 
Furthermore, let $\pi_{k+1}$ be the permutation applied by the cloud at step $k+1$ in Protocol~\ref{alg:rand_step}, unknown to the target node. Take for example the case when $t_k = 0$ and $t_{k+1} = 0$. Let $\tilde Q = \mathbf I - \eta A\cloud Q\cloud^{-1} A\cloud^\intercal$ and $\mathbf E = [\mathbf I~ \mathbf 0]$. Then:
\begin{align*}
v_{k+1} &= a_{k+1} + r_{k+1}= \mathbf E \pi_{k+1} (0, \tilde Q(v_k - r_k)  -\eta A\cloud Q\cloud^{-1}c\agent -\\ &- \eta b\agent ) + r_{k+1}.
\end{align*}
The above equation shows the target node cannot construct $v_{k+1}$ from $v_{k}$. Similar equations arise when considering the other values for $t_k$ and $t_{k+1}$. This guarantees that an integer $\tilde v_k^i$ obtained by selecting uniformly at random from $(2^{l+\lambda},2^{l+\lambda+1})\cap\mathbb Z_N$ will have the distribution statistically indistinguishable from $v_k^i$. Moreover:
\begin{align*}
&\mu_{k+2} = \max\{0,\tilde Q\mu_{k+1} -\eta A\cloud Q\cloud^{-1}c\agent - \eta b\agent\} \\
	&= \max\{0, \tilde Q(v_k - r_k(1-t_k) - s_kt_k) -\eta A\cloud Q\cloud^{-1}c\agent - \eta b\agent\} \\
	&= v_{k+1} - r_{k+1}(1-t_{k+1}) - s_{k+1}t_{k+1} .
\end{align*}
Since the blinding noise is different at each iteration and uniformly sampled, the target node cannot retrieve $\mu_{k+2}$ from $v_k$ and $v_{k+1}$. In short, if the target node receives some random values $\widetilde{\bar a_k},\widetilde{\bar b_k}\in (2^{l+\lambda},2^{l+\lambda+1})\cap \mathbb Z_N$ instead of $a_k + r_k$ and $b_k+s_k$ respectively, it would not be able to distinguish the difference. Similar arguments hold for the blinded messages $c_i$ from the comparison protocol~\ref{alg:plain_DGKV}.

Hence, by processing multiple iterations, the target node can only obtain functions of the private data that involve at least one large enough random value, which does not break privacy.

We now build a simulator $S\target$ that applies the steps of the protocol on randomly generated values. As before, 
since Protocol~\ref{alg:iteration} is secure in the semi-honest model (Proposition~\ref{prop:iteration}), we know that there exists a ppt simulator for the functionality of Protocol~\ref{alg:iteration} on inputs $I\target^{-1} = \{sk\target,sk_{DGK}\}$. However, we need to show that we can simulate the functionality of consecutive calls of Protocol~\ref{alg:iteration}, or, equivalently, on one call of Protocol~\ref{alg:iteration} but on inputs $(I\target^k, x^\ast)$. Call such a simulator $S^k\target$, that on the inputs $(I\target^k, x^\ast)$ should output a view that is statistically indistinguishable from $V^k\target(\bar{\mc I}^{k-1})$ in~\eqref{eq:view_target_k}, for $k=0,\ldots,K-1$. We already showed that although the target node has access to the output $x^\ast$ and to blinded messages from all the iterations of Protocol~\ref{alg:main_alg}, it cannot extract information from them or correlate the messages to the iteration they arise from, so $x^\ast$ will only be relevant in the last simulator.
\begin{itemize}[wide, labelwidth=!, labelindent=0pt]
	\item Generate a $\lambda+l$-length random integer $\widetilde{\rho_k}$ and add $2^l$ and obtain $\widetilde{z_k}$;
	\item Generate a random bit $\widetilde{t_k}$;
	\item Choose a random bit $\widetilde{\delta\target}$. If it is 0, then generate $l$ non-zero values of $2t$ bits, otherwise generate $l-1$ non-zero random values and one 0 value. Those will be the $\widetilde{\text{m}^{\text{comp}_k}}$ (see Protocol~\ref{alg:plain_DGKV});
	\item Generate random integers of length $l+ \lambda+1$ $\widetilde{\bar a_k}$ and $\widetilde{\bar b_k}$;
	\item Compute $\widetilde{v_k}$ according to $\widetilde{t_k}$;
	\item Denote all Paillier and DGK generated coins by $\widetilde{\text{coins}}$;
	\item Output $\widetilde I\target^{k}:= S\target^k(I\target^{k-1},x^\ast)=(I\target^{k-1},\widetilde{z_k},\widetilde{t_k},\widetilde{\text{m}^{\text{comp}_k}},\widetilde{\bar a_k},$ $\widetilde{\bar b_k},\widetilde{v_k},$ $\widetilde{\text{coins}})$.
\end{itemize}

Finally, a trivial simulator $\widetilde I\target^{K}:=S^K\target(I^{K-1}\target,x^\ast)$ for $V^K\target(\bar{\mc I}^{K-1})$ is obtained by simply generating an encryption of $x^\ast$ and outputting: $(I^{K-1}\target,\widetilde{[[x^\ast]]})$.

\begin{proposition}\label{prop:sim_target}
	$S\target^{k}(I\target^{k-1},x^\ast)\stackrel{c}{\equiv}V^{k}(\bar{\mc I}^{k-1})$, for \mbox{$k=0,\ldots,K$}.
\end{proposition}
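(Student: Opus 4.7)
The plan is to separate the final iteration $k=K$ (which only appends the encrypted optimum) from the bulk $0\le k\le K-1$ (where one call of Protocol~\ref{alg:iteration} is simulated), and in the bulk case to walk through the view $V^k\target(\bar{\mc I}^{k-1})$ componentwise, matching each real element against the simulator's output either statistically (for one-time-pad-style blindings) or computationally (for ciphertexts, via semantic security of Paillier and DGK). The crucial observation, already flagged in the paragraph preceding the proposition, is that every message the target ever receives is either a value additively blinded by a fresh $(l+\lambda)$-bit random number (Section~\ref{subsec:symmetric}) or a semantically secure ciphertext, and that the comparison bit $t_k$ is, thanks to Protocol~\ref{alg:rand_step}, an unbiased coin flip independent of the sign of $\bar\mu_k$. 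These two facts together will allow a ppt simulator with knowledge only of $sk\target, sk_{DGK}$ (and, at the end, $x^\ast$) to produce a view that differs from the real one by at most a negligible amount.

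In the bulk case $0\le k\le K-1$ the argument proceeds term by term. The mask $z_k = b_k - a_k + 2^l + \rho_k$ with $\rho_k$ uniformly drawn of $l+\lambda$ bits is statistically indistinguishable from a uniform $(l+\lambda+1)$-bit integer, so it matches $\widetilde{z_k}$. The comparison bit $t_k = (a_k\le b_k)$ is a uniform random bit because the cloud applied a uniformly random permutation $\pi_k$ on $(0,\bar\mu_k)$ in Protocol~\ref{alg:rand_step}, so $t_k\stackrel{s}{\equiv}\widetilde{t_k}$. The DGK subprotocol transcript $\text{m}^{\text{comp}_k}$ is simulatable by Proposition~\ref{prop:comp_DGK}, yielding $\text{m}^{\text{comp}_k}\stackrel{c}{\equiv}\widetilde{\text{m}^{\text{comp}_k}}$. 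The blinded iterates $\bar a_k = a_k + r_k$ and $\bar b_k = b_k + s_k$, with fresh $(l+\lambda)$-bit noises $r_k,s_k$, are each statistically indistinguishable from uniform $(l+\lambda+1)$-bit integers by the same one-time-pad argument. The selected value $v_k$ is a deterministic function of $(\bar a_k,\bar b_k,t_k)$ that the simulator reproduces on the corresponding tilded triple. Finally, the random coins are sampled by the simulator from the prescribed distributions. Combining these via a standard hybrid argument over components gives $V^k\target(\bar{\mc I}^{k-1})\stackrel{c}{\equiv}S^k\target(I\target^{k-1},x^\ast)$.

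For $k=K$, the only novel element appended to the view is the ciphertext $[[x^\ast]]$; by semantic security of Paillier (Definition~\ref{def:semantic}), this is computationally indistinguishable from the simulator's freshly generated encryption of $x^\ast$, completing the case. The main obstacle is not any single iteration but the joint behavior across iterations: one must argue that carrying the full transcript $I\target^{k-1}$ into the analysis of iteration $k$ does not enable the target to distinguish real from simulated messages, and that knowing $x^\ast$ at the end does not retroactively identify the signs of the past $\bar\mu_j$. This is exactly what freshness of $\rho_j,r_j,s_j,\pi_j$ at every iteration buys, and its formalization mirrors the hybrid construction used for the cloud (cf. Corollary~\ref{corr:consec_steps} and the subsequent hybrid chain): replacing one iteration at a time by its simulated counterpart incurs only a negligible loss, so the cumulative loss over the $K=\mathrm{poly}(\sigma)$ iterations remains negligible, which establishes the claim uniformly in $k$.
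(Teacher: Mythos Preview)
Your proof follows essentially the same componentwise route as the paper's: for $0\le k\le K-1$ you match the coins, the blinded value $z_k$, the comparison transcript $\text{m}^{\text{comp}_k}$, the randomized bit $t_k$, and the blinded pair $(\bar a_k,\bar b_k,v_k)$ against their simulated counterparts using the one-time-pad statistical argument and the random permutation $\pi_k$, exactly as the paper does; the additional cross-iteration discussion you give at the end is precisely the content of Corollary~\ref{corr:consec_steps_target} and the subsequent hybrid chain.

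One technical slip: in the case $k=K$ you invoke semantic security of Paillier to argue $[[x^\ast]]\stackrel{c}{\equiv}\widetilde{[[x^\ast]]}$, but the target node holds $sk\target$, so semantic security (Definition~\ref{def:semantic}) is not the applicable tool here. The correct argument, which the paper gives, is simply that both the real $[[x^\ast]]$ and the simulator's $\widetilde{[[x^\ast]]}$ are Paillier ciphertexts that decrypt to the same value $x^\ast$ under fresh randomness, hence are identically distributed as random variables; the target, knowing $x^\ast$, cannot distinguish two fresh encryptions of it. Replace the semantic-security citation with this observation and the case is complete.
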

\begin{proof}

For $0\leq k\leq K-1$, $(\text{coins}_{2k},\text{coins}_{3k})\stackrel{s}{\equiv}\widetilde{\text{coins}_k}$ because they are generated from the same distributions. Similarly, $\widetilde{z_k}\stackrel{s}{\equiv}z_k$ because $\widetilde{\rho_k}\stackrel{s}{\equiv}b_k-a_k+\rho_k$. From the discussion above, the same holds for $\widetilde{\text{m}^{\text{comp}_k}}$ and their counterparts $\text{m}^{\text{comp}_k}$. Furthermore, $(\widetilde{t_k},\widetilde{\bar a_k},\widetilde{\bar b_k},\widetilde{v_k})$ are statistically indistinguishable from $(t_k,\bar a_k, \bar b_k,v_k)$ due to the way they are generated, and $\widetilde{v_k}$ being consistent with $\widetilde{t_k}$. 
Thus, $S^{k}(I\target^{k-1},x^\ast)\stackrel{c}{\equiv}V^{k}(I\target^{k-1}\cup \mc I)$.

For $k=K$, $S^{K}(I\target^{K-1},x^\ast)\stackrel{c}{\equiv}V^{K}(\bar{\mc I}^{K-1})$ trivially follows from the fact that both $[[x^\ast]]$ and $\widetilde{[[x^\ast]]}$ are decrypted in $x^\ast$.
\end{proof}

Hence, we obtained that $\widetilde I\target ^k \stackrel{c}{\equiv} I\target^k$, for $k=0,\ldots,K$.
The essence of the proof of Proposition~\ref{prop:sim_target} is that all the messages the target receives are blinded with large enough random noise and the comparison bits are randomized. This results in the messages being statistically indistinguishable from random values of the same length, which means that the extra information included in $(I^{k-1}\target,x^\ast)$ from the previous iterations cannot be used to extract other information about the current values at iteration~$k$. The next corollary then follows from Proposition~\ref{prop:sim_target}:

\begin{corollary}\label{corr:consec_steps_target}
	$S\target^k(\widetilde I\target^{k-1},x^\ast)\stackrel{c}{\equiv} S\target^k(I\target^{k-1},x^\ast)$, equivalent to $S\target^{k+1}(S\target^{k}(I\target^{k-1},x^\ast),x^\ast)\stackrel{c}{\equiv}S^{k+1}\target(V^{k}(\bar{\mc I}^{k-1}),x^\ast)$, for any $k=1,\ldots,K-1$.
\end{corollary}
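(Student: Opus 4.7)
The plan is to derive Corollary~\ref{corr:consec_steps_target} directly from Proposition~\ref{prop:sim_target} by invoking the standard fact that computational indistinguishability is preserved under probabilistic polynomial-time (ppt) transformations. Concretely, Proposition~\ref{prop:sim_target} (applied at step $k-1$) gives
\[
\widetilde{I}_{\mc T}^{k-1} \;=\; S_{\mc T}^{k-1}(I_{\mc T}^{k-2},x^\ast) \;\stackrel{c}{\equiv}\; V_{\mc T}^{k-1}(\bar{\mc I}^{k-2}) \;=\; I_{\mc T}^{k-1}.
\]
The first claim of the corollary is obtained by applying $S_{\mc T}^k(\cdot,x^\ast)$ to both sides of this indistinguishability.

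The key step is therefore the preservation lemma: if $X \stackrel{c}{\equiv} Y$ and $M$ is a ppt algorithm, then $M(X) \stackrel{c}{\equiv} M(Y)$. I would prove this by contraposition, in the standard way: suppose some ppt distinguisher $D$ separates $M(X)$ from $M(Y)$ with non-negligible advantage; then the composition $D \circ M$ is itself a ppt distinguisher (composition of ppt algorithms is ppt) that separates $X$ from $Y$ with the same non-negligible advantage, contradicting $X \stackrel{c}{\equiv} Y$. Since the simulator $S_{\mc T}^k$ constructed in Section~\ref{subsubsec:target} only performs polynomially many uniform samplings of integers of bounded length, Paillier/DGK encryptions, and arithmetic operations, it is manifestly ppt. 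The auxiliary input $x^\ast$ is a fixed string and does not affect ppt-ness. Hence
\[
S_{\mc T}^k\!\left(\widetilde{I}_{\mc T}^{k-1},x^\ast\right) \;\stackrel{c}{\equiv}\; S_{\mc T}^k\!\left(I_{\mc T}^{k-1},x^\ast\right),
\]
which is the first claimed equivalence.

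For the second, equivalent, formulation I would simply unfold the definition $\widetilde{I}_{\mc T}^{k} := S_{\mc T}^{k}(I_{\mc T}^{k-1},x^\ast)$ on the left and apply Proposition~\ref{prop:sim_target} at step $k$ to rewrite the right-hand side. Namely, setting $\widetilde{I}_{\mc T}^{k-1} = S_{\mc T}^{k-1}(I_{\mc T}^{k-2},x^\ast)$, the first equivalence reads $S_{\mc T}^{k}(\widetilde{I}_{\mc T}^{k-1},x^\ast) \stackrel{c}{\equiv} S_{\mc T}^{k}(I_{\mc T}^{k-1},x^\ast) \stackrel{c}{\equiv} V_{\mc T}^{k}(\bar{\mc I}^{k-1})$, and a further application of the preservation lemma (this time with the ppt algorithm $S_{\mc T}^{k+1}(\cdot,x^\ast)$) on both sides yields
\[
S_{\mc T}^{k+1}\!\left(S_{\mc T}^{k}(I_{\mc T}^{k-1},x^\ast),\,x^\ast\right) \;\stackrel{c}{\equiv}\; S_{\mc T}^{k+1}\!\left(V_{\mc T}^{k}(\bar{\mc I}^{k-1}),\,x^\ast\right),
\]
as required. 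Transitivity of $\stackrel{c}{\equiv}$ (up to a doubling of the negligible bound, absorbed in any positive polynomial $p$) is used implicitly to chain the two steps.

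The only delicate point, and what I would treat most carefully, is checking that the auxiliary input $x^\ast$ does not sneak in extra information that invalidates the ppt-reduction: this is where the argument in the paragraphs preceding Proposition~\ref{prop:sim_target} is essential, since it already established that the blindings used inside $S_{\mc T}^k$ are freshly and uniformly sampled at each iteration with $\lambda$-bit statistical slack and therefore statistically mask any correlation between $x^\ast$ and the messages of iteration $k$. Consequently, feeding $S_{\mc T}^k$ either the real transcript $I_{\mc T}^{k-1}$ or the simulated one $\widetilde{I}_{\mc T}^{k-1}$ produces transcripts that differ only through the (already indistinguishable) input, and the inductive hybrid argument in Section~\ref{subsubsec:target} can then iterate this corollary across all $K$ iterations without the distinguishing advantage growing beyond a negligible function.
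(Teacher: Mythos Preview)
Your proposal is correct and matches the paper's approach: the paper does not give an explicit proof of this corollary at all, simply stating that it ``follows from Proposition~\ref{prop:sim_target}'' after observing that the blinded messages are statistically indistinguishable from fresh randomness. You have spelled out the implicit step---closure of $\stackrel{c}{\equiv}$ under the ppt transformation $S_{\mc T}^k(\cdot,x^\ast)$---and noted that the ``equivalent'' second formulation is just the first one with the index shifted (since $\widetilde I_{\mc T}^{k}=S_{\mc T}^{k}(I_{\mc T}^{k-1},x^\ast)$ and $I_{\mc T}^{k}=V_{\mc T}^{k}(\bar{\mc I}^{k-1})$), which is exactly what the paper intends.
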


Finally, we construct a simulator $S\target(I\target)$ for the execution of Protocol~\ref{alg:main_alg} and we show that its view will be statistically indistinguishable from $V\target(\mc I)$. To this end, we define the following sequence, from which we drop the input $x^\ast$ to the simulators to not burden the notation:
\begin{align*}
	V\target(\mc I)=H_0(\mc I) &= V^K\target(\bar{\mc I}^{K-1})\\
	H_1(\mc I,x^\ast) &= S^K\target\circ V^{K-1}\target(\bar{\mc I}^{K-2})\\
	H_2(\mc I,x^\ast) &= S^K\target\circ S^{K-1}\target \circ V^{K-2}\target(\bar{\mc I}^{K-3})\\
	&~\vdots\\
	H_{K}(\mc I,x^\ast) &= S^K\target\circ S^{K-1}\target\circ \ldots\circ S^1\target\circ V^0\target(\mc I)\\
	S\target(I\target)=H_{K+1}(I\target) &= S^K\target\circ S^{K-1}\target\circ \ldots\circ S^1\target\circ S^0\target(I\target)
\end{align*}

By transitivity, $H_0$ and $H_{K+1}$ are statistically indistinguishable if:
\[
H_0 \stackrel{c}{\equiv} H_1\stackrel{c}{\equiv} \ldots  \stackrel{c}{\equiv} H_k \stackrel{c}{\equiv} H_{k+1} \stackrel{c}{\equiv} H_{k+2} \stackrel{c}{\equiv} \ldots \stackrel{c}{\equiv} H_{K+1}. 
\]
The result follows from induction on Corollary~\ref{corr:consec_steps_target}. 
In conclusion, we obtain that $S\target(I\target)\stackrel{c}{\equiv} V\target(\mc I)$ which verifies that Protocol~\ref{alg:main_alg} achieves privacy with respect to the target node.

The proof of Theorem~\ref{thm:main_alg} is now complete. \hfill \qed

\subsection{Proof of Theorem~\ref{thm:main_alg_coal}}\label{subsec:proof_thm2}
We will now show that any coalition consistent with the setup of Propositions~\ref{prop:assum_1},~\ref{prop:assum_2} and with the assumption that the cloud and target node do not collude does not gain any new information about the private data of the honest agents, other than what can be inferred solely from the inputs and outputs of the coalition. As mentioned in Section~\ref{sec:security_proof}, the agents in a coalition only add their inputs to the view of the coalition, but do not have any messages in the protocol.

\subsubsection{Simulator for the cloud $\mc C$ and $\bar p$ agents $\mc A_i$}
	Consider the coalition between a set of agents $\mc A_{i=1,\ldots,\bar p}$ and the cloud $\mc C$, which has the inputs $(\{b_i\}_{i=1,\ldots,\bar m},\{c_i\}_{i=1,\ldots,\bar n},A\cloud,Q\cloud)$ and no output from the execution of Protocol~\ref{alg:main_alg}. Since $\bar p < p$, 
the coalition is not able to compute $\mu_1$ by itself, and the semantic security of the Paillier cryptosystem is again enough to ensure the privacy of the rest of the sensitive data. A simulator for this coalition can be constructed following the same steps in Section~\ref{subsubsec:cloud} on the augmented inputs defined above, from the fact that every value the cloud receives is encrypted by a semantically secure cryptosystem.

\subsubsection{Simulator for the target node $\mc T$ and $\bar p$ agents $\mc A_i$}
	Consider the coalition between a set of agents $\mc A_{i=1,\ldots,\bar p}$ and the target node $\mc T$, which has the inputs $(\{b_i\}_{i=1,\ldots,\bar m},\{c_i\}_{i=1,\ldots,\bar n},sk\target,sk_{DGK})$ and output $(x^\ast)$ from the execution of Protocol~\ref{alg:main_alg}. If both matrices $Q\cloud, A\cloud$ are public, if there exists $i\in\{1,\ldots,\bar p\}$ such that $a_i^\intercal x^\ast < b_i$, the coalitions finds out $\mu^\ast_i=0$, which comes from public knowledge in the KKT conditions~\eqref{eq:KKT4}. From this, the coalition is able to find some coins of the cloud: $r_K$ and $s_K$ associated to element $i$. However, these values are independent from the rest of the parameters and do not reveal any information about the private inputs of the parties. Apart from this, the coalition is not able to compute any private data of the honest parties from the execution of the protocol, due to the secure blinding. 
A simulator for this coalition can be build by following the same steps as described in Section~\ref{subsubsec:target} on the augmented inputs defined above.

The proof is now complete.\hfill \qed

\subsection{Sketch proof of Theorem~\ref{thm:alternative}}\label{subsec:proof_thm3}
Following similar steps as in the proof of the main protocol in Appendix~\ref{subsec:proof_thm1}, simulators for the involved parties can be constructed from their inputs and outputs. A simulator for the cloud randomly generates the messages and the indistinguishability between the views is guaranteed by the indistinguishability of the Pailler encryptions. The outputs of Protocol~\ref{alg:iteration_alternative} to the target node are different than the original Protocol~\ref{alg:iteration}. This means that the outputs for the target node in Protocol~\ref{alg:main_alg} with the iterations as in Protocol~\ref{alg:iteration_alternative} are $(x^\ast, \{(r_k)_i(\bar\mu_k)_i\}_{i=1,\ldots,m,~k=0,\ldots,K-1})$. In this case, a simulator for the target node simply outputs its inputs as messages in the view, and indistinguishability follows trivially. For the multi-party privacy, simulators for the allowed coalitions can be easily constructed by expanding the simulators for the cloud and the target node with the auxiliary inputs from the agents in the coalition.\hfill \qed

\subsection{Proof of Proposition~\ref{prop:assum_1}}\label{subsec:proof_assum_1}
The coalition has access to the following data, which is fixed: $A\cloud, Q\cloud, x^\ast, \{b_i\}_{1,\ldots,\bar m}, \{c_i\}_{i,\ldots, \bar n}$.

Proof of (1). We need to address two cases: the non-strict satisfaction of the constraints and the equality satisfaction of the constraints.

\noindent(I) Suppose there exists a solution $\{b_i\}_{\bar{m}+1, \ldots, m}$ and $\{c_i\}_{\bar{n}+1, \ldots, n}$ and $\mu$ to the KKT conditions such that $a_i^\intercal x^\ast < b_i$ for some ${\bar{m}+1} \leq i \leq m$. In particular this implies that $\mu_i = 0$. Then define $c' := c\agent$, $\mu' := \mu$ and $b'$ such that $b'_j := b_j$ for all $j \neq i$ and $b_i'$ to take any value $b_i' \geq a_i^\intercal x^\ast$. The new set of points $(b', c', \mu')$ is also a solution to the KKT conditions, by construction. 

\noindent(II) Alternatively, suppose there exists a solution $\{b_i\}_{\bar{m}+1, \ldots, m}$ and $\{c_i\}_{\bar{n}+1, \ldots, n}$ and $\mu$ to the KKT conditions such that $a_j^\intercal x^\ast = b_j$ for all $j={\bar{m}+1}, \ldots, m$. Consider there exists a vector $\delta$ that satisfies $\delta\succeq 0$ and $A_{21}^\intercal \delta = 0$. Compute $\epsilon \geq 0$ as: $\epsilon = \min \left(\frac{\mu_k}{\delta_k}\right)_{\delta_k>0, k=\bar m+1,\ldots,m}$. Then, we construct $\mu' := \mu - \epsilon \left[ \begin{array}{c} 0 \\ \delta \end{array}\right]$ that satisfies $\mu'\succeq 0$ and $\mu'_i =0$ for some ${\bar{m}+1} \leq i \leq m$ that is the argument of the above minimum. 

Furthermore, define $c' := c\agent + \epsilon\left[ \begin{array}{c} 0\\  A_{22}^\intercal \delta \end{array}\right] $ and $b'$ such that $b'_j := b_j$ for all $j \neq i$ and $b'_i $ to be any value  $b'_i > b_i$. Then $(b', c', \mu')$ is also a solution to the KKT conditions. More specifically, the complementarity slackness condition holds for all $j={\bar{m}+1}, \ldots, m$:
\begin{equation}
\begin{aligned}
&~\mu'_j ~ (a_j^\intercal x^\ast - b'_j) =&& \mu_j' (\underbrace{a_j^\intercal x^\ast - b_j}_{=0}) =  0, ~~ j \neq i\\
& \underbrace{\mu'_i}_{=0} (a_i^\intercal x^\ast - b'_i)=&& 0, ~~ j = i.
\end{aligned}
\end{equation}
Then we can check the gradient condition:
\begin{align}
\begin{split}
&Q\cloud x^\ast + A\cloud^\intercal  \mu' + c' =\\
&Q\cloud x^\ast + A\cloud^\intercal \left( \mu - \epsilon \left[ \begin{array}{c} 0 \\ \delta \end{array}\right] \right) + \left(c + \epsilon\left[ \begin{array}{c} 0\\  A_{22}^\intercal \delta \end{array}\right] \right) \\
&=Q\cloud x^\ast + A\cloud^\intercal  \mu + c\agent =  0.
\end{split}
\end{align}
Hence, $b'\neq b\agent$ satisfies the KKT conditions and the coalition cannot uniquely determine $b\agent$.

Proof of (2). Consider a solution $\{b_i\}_{\bar{m}+1, \ldots, m}$ and $\{c_i\}_{\bar{n}+1, \ldots, n}$ and $\mu$ to the KKT conditions. For some $\epsilon >0$ define  $\mu' := \mu + \epsilon \left[ \begin{array}{c} 0 \\ \delta \end{array}\right]$ and $c' := c\agent - \epsilon\left[ \begin{array}{c} 0\\  A_{22}^\intercal \delta \end{array}\right] $. Define $b'$ such that for all $j$, it holds that $b'_j = a_j^\intercal x^\ast$. Then $(b', c', \mu')$ is also a solution to the KKT conditions. Specifically, it follows that $\mu'\geq 0$. Moreover the complementarity slackness condition holds by construction of $b'$, and as before the gradient condition holds. Hence, $c'\neq c\agent$ satisfies the KKT solution, and the coalition cannot uniquely determine $c\agent$.
\hfill \qed
%

\end{document}